\documentclass[a4paper,11pt]{article}
\usepackage[a4paper,  total={6in, 10in}]{geometry}
\usepackage[utf8]{inputenc}
\usepackage{amssymb,amsthm,mathtools}
\usepackage{enumitem}
\usepackage{hyperref}
\usepackage[charter]{mathdesign}
\usepackage{color}

\DeclareFontFamily{U}{BOONDOX-calo}{\skewchar\font=45 }
\DeclareFontShape{U}{BOONDOX-calo}{m}{n}{
  <-> s*[1.05] BOONDOX-r-calo}{}
\DeclareFontShape{U}{BOONDOX-calo}{b}{n}{
  <-> s*[1.05] BOONDOX-b-calo}{}
\DeclareMathAlphabet{\mathcalboondox}{U}{BOONDOX-calo}{m}{n}
\SetMathAlphabet{\mathcalboondox}{bold}{U}{BOONDOX-calo}{b}{n}
\DeclareMathAlphabet{\mathbcalboondox}{U}{BOONDOX-calo}{b}{n}
\newcommand{\mcb}[1]{{\mathcalboondox #1}}

\newtheorem{theorem}{Theorem}[section]

\newtheorem{lemma}[theorem]{Lemma}
\newtheorem{proposition}{Proposition}[section]
\newtheorem{definition}{Definition}[section]
\newtheorem{remark}{Remark}[section]

\newcommand{\acks}{\textbf{Acknowledgements.}}

\usepackage{tikz}
\usetikzlibrary{shapes.misc}
\usetikzlibrary{arrows.meta}
\tikzset{cross/.style={cross out, draw=black, minimum size=2*(#1-\pgflinewidth), inner sep=0pt, outer sep=0pt},
cross/.default={0.2cm}}
\definecolor{blue-violet}{rgb}{0.54, 0.17, 0.89}
\usetikzlibrary{decorations.pathreplacing}


\newcommand{\pat}[1]{{\textcolor{black}{{ #1}}}}

\title{Hydrodynamical behavior for the generalized symmetric exclusion with open boundary}
\author{C. Franceschini \thanks{Instituto Superior T\'ecnico, Department of Mathematics, Av. Rovisco Pais 1, 1049-001, Lisbon. E-mail: {\tt chiara.fraceschini@tecnico.ulisboa.pt}}
\and P. Gon\c calves\thanks{Instituto Superior T\'ecnico, Department of Mathematics, Av. Rovisco Pais 1, 1049-001, Lisbon. E-mail: {\tt pgoncalves@tecnico.ulisboa.pt}}\and B. Salvador\thanks{Instituto Superior T\'ecnico, Department of Mathematics, Av. Rovisco Pais 1, 1049-001, Lisbon. E-mail: {\tt beatriz.salvador@tecnico.ulisboa.pt}}}

\begin{document}

\maketitle

\begin{abstract}{We analyze the generalized symmetric exclusion process, which allows at most $\alpha$ particles per site, and we put it in contact with stochastic reservoirs whose strength is regulated by a parameter $\theta\in\mathbb R$. We prove that the hydrodynamic behavior is given by the heat equation and depending on the value of $\theta$, the equation is suplemented with different boundary conditions. Setting $\alpha = 1$ we find the results known in  \cite{baldasso2017exclusion,byronMPRF} for the symmetric simple exclusion process.}
\end{abstract}

\textbf{Keywords: }Generalized Exclusion Process; Hydrodynamic Limit; Heat Equation 


\newcommand{\ABS}[1]{\left(#1\right)} 
\newcommand{\veps}{\varepsilon} 


\section{Introduction}
A common problem in the field of statistical mechanics is to deduce the evolution laws of a  thermodynamic quantity of interest in a physical system, by analysing the motion of its molecules. Assuming that each molecule behaves as a continuous-time  random walk, it arises a system of stochastic  interacting particles \cite{spitzer}. Among the many studied models is the exclusion process, which has been serving as a toy model for the analysis of mass transport. Its dynamics can be described as follows: each site of a discrete space can have at most $\alpha\in\mathbb N$ particles. After an exponential clock of rate one, one of the particles at a position $x$ jumps to a position $y$ of the lattice according to a transition rate $p(y-x)$. One is interested in analysing the space-time evolution of the density of particles, which is the quantity conserved by the dynamics. When the parameter $\alpha=1$ and $p(1)=p(-1)=1$, the system is known as the symmetric simple exclusion process (SSEP) and, when $\alpha\neq 1$, as the generalized SSEP. 

The model was first  introduced in \cite{gunter}, where it was called partial exclusion. The name generalized exclusion was first introduced in Section 2.4 of \cite{kipnis1998scaling}, for a model with a mixture of zero-range, since several particles can occupy the same site, and simple exclusion, because the rates $r$ only depend on the number of particles at the departure and the arrival sites, and  restricts the number of particles per site. There the choice for $r$ is such that the model is non-gradient, but  here the choice for $r$   is such that the model is gradient. We mention that the macroscopic behavior of the non-gradient version is studied \pat{in} \cite{claudio} for an open boundary which does not scale with the size of the system. 

Our focus in this article is to deduce the space-time evolution of the density of particles when the system is evolving on the discrete set of sites $\{1,\cdots, N-1\}$, that we call bulk. Moreover, we put the process in contact with two boundary reservoirs (modelled by the sites $x=0$ a nd $x=N$) interacting with the bulk at different intensities. More precisely, the boundary rates of injection and absorption of particles  scale with the size of the system, i.e. $N$,  through a parameter $\theta \in \mathbb{R}$. When $\theta<0$ the interaction is fast and when $\theta\geq0$ it is slow. We observe that when $\alpha \neq 1$, this model differs from the usual SSEP because it allows more than one particle per site and this difference, at the level of the microscopic dynamics, results in having a model that is not solvable by a matrix ansatz formulation  and, as a consequence, there is not much information about its non-equilibrium stationary state (NESS). For $\alpha=1$, the matrix ansatz method developed by \cite{derrida} allows getting information on its NESS, which in turn enables one to obtain explicitly the stationary correlations of the system for any value of $\theta\in\mathbb R$, see, for example,  Section 2.2 of \cite{GJMN} and references therein.

The hydrodynamic limit in the case of the exclusion process with open boundary and $\alpha=1$ was analyzed in \cite{baldasso2017exclusion} the slow \pat{case,} and in \cite{byronMPRF} the fast case. Here we \pat{answer a natural question that usually arises which has to do with the extension of the hydrodynamic limit,  when more than one particle is allowed at each site. Towards this, we  extend the hydrodynamic limit  for $\alpha\neq 1$ in both the slow and  fast regimes and we obtain the heat equation with several boundary conditions.}  Our proof follows the same approach as in \cite{baldasso2017exclusion, byronMPRF} i.e.~ it relies on the entropy method introduced \pat{by Guo, Papanicolau, Varadhan} in \cite{GPV}. 
Our result shows that that the entropy method is robust enough in order to prove the  hydrodynamic limit  for the whole family of generalized exclusion models, regardless of the label $\alpha$ and so also for non-integrable models with a fast/slow boundary.
Now we comment a bit on the strategy of the proof. The idea of the argument is to prove tightness of the sequence of  empirical measures associated to the density and then characterize uniquely the limit point. This last characterization is done in two steps. First, we show that the  limiting points are supported on measures absolutely continuous with respect to the Lebesgue measure. Second, we show that the density is the unique weak solution of the  heat equation with diffusive coefficient $\alpha$ given by
$$
\partial_{t}\rho_{t}(u)=\alpha\Delta\, {\rho} _{t}(u) \;,
$$
 where $\rho(t,u)$ denotes the density of particles at time $t\geq 0$ and position $u\in[0,1]$. Depending on the range of the parameter $\theta$, we obtain \pat{either Dirichlet boundary conditions (for $\theta<1$) that fix the value of the profile at the boundary of $[0,1]$, Robin conditions (for $\theta=1$)  that fix the value of the flux at the boundary of $[0,1]$ as beging proportional to the difference of the densities close to boundary, or Neumann boundary conditions (for $\theta>1$), that fixes the aforementioned flux as being null.} This is similar to the hydrodynamic behavior  obtained in \cite{baldasso2017exclusion, byronMPRF} for the case $\alpha=1$. From our results we also obtain theirs.
 
  The main difference of our proof with respect to previous  proofs is that the two replacement lemmas that are necessary in order to obtain the boundary conditions from the exclusion dynamics, are more complicated since the process allows $\alpha$ particles per site. In the case $\alpha=1$,  since $\eta(x)\in\{0,1\}$, the Glauber dynamics flips the value of the configuration at the boundary so that this transformation turns $\eta(x)$ into $1-\eta(x)$, and the bulk dynamics  coincides with the exchange dynamics, which replaces $\eta(x)$ with $\eta(x+1)$ and vice-versa.
  Nevertheless, when there is more than \pat{one} particle per site, the Glauber dynamics injects or removes particles at the boundary and this does no longer  coincides with the flip dynamics; while the bulk dynamics removes one particle from a site $x$ and takes it to $x+1$, which means that $\eta(x)$ is converted into $\eta(x)-1$ and $\eta(x+1)$ to $\eta(x+1)+1$.  This brings additional difficulties when analysing  the  boundary terms.  The way to overcome this difficulty,  is to make a proper splitting of the \pat{state-space} of the process, and write carefully the action of the Glauber dynamics in a way that creation and annihilation  terms can be combined. 
There are two other related problems that we comment on. The first one is the hydrostatic limit, that we can obtain as a consequence of the hydrodynamic limit in the case $\theta=0$ by a simple analysis of the stationary correlation function and also \pat{for $\theta>1$,} by applying the argument developed in \cite{tsunoda}, which consists in analysing the mass of the system in the time scale $N^{1+\theta}$, and this will appear in a future work \cite{FGJS}. The hydrostatics in the other cases is left open.  
 The second problem  is the analysis of the non-equilibrium fluctuations, which is well known for the case $\alpha = 1$, see \cite{GJMN} and references therein. \pat{For $\alpha\neq 1$, that result is completely open. It would be interesting to check out whether  the fact that for $~\alpha\neq 1$ it does not hold $\eta(x)^2=\eta(x)$ (which  destroys many combinatorial properties that hold for the SSEP)   brings any additional terms to the equation governing the  fluctuations  of the system.  }
 We also highlight that our replacement lemmas are the building blocks in order to analyse the fluctuations and the large deviations principle for our model. All these results are work in progress. 

During the writing of this article, we learned that \cite{schiavo2021scaling} obtained both the hydrodynamic and the hydrostatic limit for our model and for the inclusion process (see also \cite{franceschini2020symmetric}) evolving on Lipschitz domains in arbitrary dimensions. Those results were obtained by an approach based on duality but also on mild solutions of the heat equation and convergence of  random walks to \pat{Brownian motions} with different boundary conditions. In any case, we decided to write our short proof of the hydrodynamic limit because our approach does not rely on those arguments. Nevertheless, many details in our proof are hidden since we try to highlight the main differences with respect to the proof for $\alpha=1$.  We also believe that our method of proof of the replacement lemmas is new and could fit other models of interest. About the dimension, we present the results in $d=1$ but we believe that they can be extended to higher  dimensions as, for example, to the set $\{1,\cdots, N-1\}\times \mathbb T_N^{d-1}$, where $\mathbb T_N$ represents the discrete torus.  

Here follows an outline of the article. In Section \ref{section2} we introduce the model and our main result, Theorem \ref{th:hyd_ssep}. In Section \ref{section3} we prove tightness for any range of the parameter $\theta$ and we characterize the limit points as weak solutions of the corresponding partial differential equations. In Section \ref{section4} we establish the replacement lemmas.
\section{The model and statement of results} \label{section2}
\subsection{SEP($\alpha$)}
Fix a parameter $\alpha\in\mathbb N$ and let $\Lambda_N:=\{1,\cdots, N-1\}$.
The exclusion process that we consider in this paper has state space given by  $\Omega_N:=\{0,\cdots, \alpha\}^{\Lambda_N}$ and the elements of $\Omega_N$ are configurations denoted by $\eta$. The number of particles at $x$ is denoted by $\eta(x)$. In this process particles wait an exponential random time after which, \pat{one of them  jumps to a nearest-neighbor site} if, and only if, the destination site has at most $\alpha-1$ particles, otherwise the particle waits a new random time. 
The infinitesimal Markov generator of our process is  denoted by $\mcb{L}_N$ and it is given on $f :\Omega_N\rightarrow{\mathbb R}$ by
\begin{equation}\label{L_SEP(alpha)_def}
\mcb{L}_N f (\eta) = \mcb{L}_{\ell} f (\eta) + \mcb{L}_{bulk} f (\eta) + \mcb{L}_{r} f (\eta),
\end{equation}
where, for all $\eta \in \Omega_N$,
\begin{flalign} \label{generator_left_SEP}
\mcb{L}_{\ell} f (\eta) &= \frac{1}{N^\theta}c_{1,0}(\eta)\Big\{f(\eta^{1,0}) - f(\eta)\Big\} + \frac{1}{N^\theta}c_{0,1}(\eta) \Big\{f(\eta^{0,1}) - f(\eta)\Big\},& \\ \label{generator_bulk_SEP}
\mcb{L}_{bulk} f (\eta) &= \sum_{x = 1}^{N-2} \left[ c_{x,x+1}(\eta) \Big\{ f(\eta^{x,x+1}) - f(\eta) \Big\} + c_{x+1,x}(\eta) \Big\{ f(\eta^{x+1,x}) - f(\eta) \Big\} \right], & 
\\ \label{generator_right_SEP}
\hspace{0.4cm} \mcb{L}_{r} f (\eta) &= \frac{1}{N^\theta} c_{N-1,N}(\eta) \Big\{f(\eta^{N-1,N}) - f(\eta)\Big\} + \frac{1}{N^\theta}c_{N,N-1}(\eta) \Big\{f(\eta^{N,N-1}) - f(\eta)\Big\}.
\end{flalign}
 Above $$
\eta^{x,y}(z) = (\eta(x)-1)\mathbb{1}_{z=x}+(\eta(y)+1)\mathbb{1}_{z=y}+ \eta(z)\mathbb{1}_{z\neq x,y},$$ where $x,y\in \Lambda_N$. For $x,y\in\Lambda_N$  with $y\neq x$ and $|x-y|\leq 1$, the rates are chosen as  $$c_{x,y}(\eta) := \eta(x) [\alpha-\eta(y)].$$

Now we define $$\eta^{0,1}(z)=(\eta(1)+1)\mathbb{1}_{z=1}\mathbb{1}_{\eta(1)\leq \alpha-1}
+\eta(1)\mathbb{1}_{z=1}\mathbb{1}_{\eta(1)=\alpha}+\eta(z)\mathbb{1}_{z\neq 1}.$$
Analogously we define $\eta^{N,N-1}$ by replacing $0$ by $N$ and $1$ by $N-1
$, respectively. 
We also define 
$$\eta^{1,0}(z)=(\eta(1)-1)\mathbb{1}_{z=1}\mathbb{1}_{\eta(1)\geq 1}
+\eta(1)\mathbb{1}_{z=1}\mathbb{1}_{\eta(1)=0}+\eta(z)\mathbb{1}_{z\neq 1}.$$
Analogously we define $\eta^{N-1,N}$ by replacing $1$ by $N-1$ and $0$ by $N
$, respectively. 

Moreover,   for $\epsilon, \gamma, \beta, \delta\in\mathbb R^+$ and $\theta\in\mathbb R$ we define  $$c_{0,1}(\eta)=\epsilon [\alpha-\eta(1)],\quad \quad  
c_{1,0}(\eta)= \gamma\eta(1),$$ $$c_{N-1,N}(\eta)=\beta\eta(N-1) \quad \textrm{and}\quad c_{N,N-1}(\eta)=\delta [\alpha-\eta(N-1)].$$ 

The dynamics of  SEP($\alpha$) is described in the figure below.

\begin{figure}[htb!]
\centering
\begin{tikzpicture}[scale=0.90]
\node at (5,0.7) (a) [fill,circle,inner sep=0.15cm,color=blue] { };
\node at (6,0.7) (b) [fill,circle,inner sep=0.15cm,color=blue] { };
\node at (6,1.3) (b') [fill,circle,inner sep=0.15cm,color=blue] { };
\node at (6,1.8) (b*) [fill,circle,inner sep=0.05cm,color=black] { };
\node at (6,2) (b'') [fill,circle,inner sep=0.05cm,color=black] { };
\node at (6,2.2) (b**) [fill,circle,inner sep=0.05cm,color=black] { };
\node at (6,2.7) (o) [fill,circle,inner sep=0.15cm,color=blue] { };
\draw (0,0) -- (14,0);
\foreach \x in {0,...,2} \draw (\x, 0.3) -- (\x, -0.3) node (\x) [above=1cm] { } node[below] {\x}; 
\node at (0,0) [below = 0.8cm] {left};
\node at (0,0) [below = 1.2cm] {reservoir};
\node at (0,0) (A) [below = 1.8cm] { };
\node at (1,0) (A') [below = 1.2cm] { };
\node at (3,-0.5) {\ldots};
\draw[-latex] (1) to[out=60,in=120] node[midway,font=\scriptsize,above] {$\hspace{0.7cm}\eta(1)[\alpha-\eta(2)]$} (2);
\draw[-latex,color = red] (1) to[out=130,in=80] node[midway,font=\scriptsize,above,color=red] {$\frac{\gamma}{N^\theta} \eta(1)$} (0);
\draw[-latex, color = blue] (A) to [bend right] node[midway,font=\scriptsize,below=0.1cm,color=blue] {$\frac{\epsilon}{N^\theta} [\alpha-\eta(1)]$} (A');
\foreach \x\y in {4/x - 1,6/x + 1} \draw (\x, 0.3) -- (\x, -0.3) node (\x) [above=1.4cm] { } node [below] {\y};
\foreach \x\y in {5/x} \draw (\x, 0.3) -- (\x, -0.3) node (\x) [above=1.4cm] { } node [below=0.065cm] {\y};
\node at (7,-0.5) {\ldots};
\draw[-latex] (5) to[out=100,in=120] node[midway,font=\scriptsize,above] { } (o);
\draw (5.1,2.9) node[cross,red] { };
\draw[-latex] (5) to[out=130,in=80] node[midway,font=\scriptsize,above] {$\hspace{-1.5cm}\eta(x)[\alpha-\eta(x-1)]$} (4);
\foreach \z\w in {8/y-1,10/y+1} \draw (\z, 0.3) -- (\z, -0.3) node (\z) [above=1.4cm] { } node [below] {\w};
\foreach \z\w in {9/y} \draw (\z, 0.3) -- (\z, -0.3) node (\z) [above=1.4cm] { } node [below=0.065cm] {\w};
\node at (8,0.7)  [fill,circle,inner sep=0.15cm,color=blue] { };
\node at (8,1.2) [fill,circle,inner sep=0.05cm,color=black] { };
\node at (8,1.4) [fill,circle,inner sep=0.05cm,color=black] { };
\node at (8,1.6) [fill,circle,inner sep=0.05cm,color=black] { };
\node at (8,2.1) [fill,circle,inner sep=0.15cm,color=blue] { };
\node (c) at (9,0.7) [fill,circle,inner sep=0.15cm,color=blue] { };
\node (c) at (9,1.3) [fill,circle,inner sep=0.15cm,color=blue] { };
\node (c) at (9,1.8) [fill,circle,inner sep=0.05cm,color=black] { };
\node (c) at (9,2) [fill,circle,inner sep=0.05cm,color=black] { };
\node (c) at (9,2.2) [fill,circle,inner sep=0.05cm,color=black] { };
\node (c1) at (9,2.7) [fill,circle,inner sep=0.15cm,color=blue] { };
\node at (9,2.8) (Z) [above] { };
\node at (8,2.8) (W) [above] { };
\node at (10,2.8) (Y) [above] { };
\draw[-latex] (Z) to[out=130,in=80] node[midway,font=\scriptsize,above]  { } (W);
\draw [decorate,decoration={brace,amplitude=0.1cm, mirror}] (6.5,0.4) -- (6.5,3) node [black,midway,xshift=0.3cm] {\scriptsize$\alpha$};
\node at ([shift={(40:-2)}]9.4,4.8) {\scriptsize $\hspace{-0.2cm}\eta(y)[\alpha - \eta(y-1)]$};
\draw[-latex] (Z) to[out=60,in=120] node[midway,font=\scriptsize,above] {\scriptsize $\hspace{1.3cm}\eta(y)[\alpha-\eta(y+1)]$} (Y);
\foreach \x\y in {12/N-2,13/N-1,14/N} \draw (\x, 0.3) -- (\x, -0.3) node (\x) [above=1cm] { } node [below] {\y};
\node at (11,-0.5) {\ldots};
\node at (13,0) (B') [below = 1.2cm] { };
\node at (14,0) (B) [below = 1.8cm] { };
\node at (14,0) [below = 0.8cm] {right};
\node at (14,0) [below = 1.2cm] {reservoir};
\draw[-latex] (13) to[out=130,in=60] node[midway,font=\scriptsize,above] {$\hspace{-1.8cm}\eta(N-1)[\alpha-\eta(N-2)]$} (12);
\draw[-latex,color = red] (13) to[out=80,in=130] node[midway,font=\scriptsize,above,color=red] {$\hspace{0.5cm}\frac{\beta}{N^\theta} \eta(N-1)$} (14);
\draw[-latex, color = blue] (B) to [bend left] node[midway,font=\scriptsize,below=0.1cm,color=blue] {$\frac{\delta}{N^\theta} [\alpha-\eta(N-1)]$} (B');
\end{tikzpicture}
\end{figure}

The SEP($\alpha$) describes an irreducible continuous-time Markov chain with a finite state-space and so it admits a unique invariant measure. In equilibrium, i.e. when $\frac{\epsilon}{\epsilon+\gamma}= \frac{\delta}{\delta + \beta}$, the invariant measure is reversible and explicitly known. For  a function $\varrho:[0,1]\to[0,1]$, let  $\nu^N_{\varrho(\cdot)}$ be the product measure whose marginals are given by the  Binomial($\alpha$, $\varrho(\tfrac xN)$) distribution, i.e.
\begin{align} \label{reference_measure}
\nu^N_{\varrho(\cdot)}(\eta) = \prod_{x \in\Lambda_N} {\alpha \choose \eta(x)} \left[\varrho \left(\tfrac{x}{N} \right) \right]^{\eta(x)} \left[1-\varrho \left(\tfrac{x}{N} \right) \right]^{\alpha - \eta(x)}.
\end{align}
\begin{proposition}[\cite{carinci2013duality}]
Let $\alpha \in \mathbb{N}$ and $\varrho \in (0,1)$. If 
$ \frac{\epsilon}{\epsilon+\gamma} =\frac{\delta}{\delta + \beta}= \varrho,
$ then the homogeneous product measure $\nu^N_{\varrho}(\cdot)$ given in \eqref{reference_measure} with $\varrho(\cdot)\equiv \varrho$
is reversible.
\end{proposition}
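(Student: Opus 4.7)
The plan is to verify the detailed balance equations
$$
\nu^N_\varrho(\eta)\, c_{x,y}(\eta) \;=\; \nu^N_\varrho(\eta^{x,y})\, c_{y,x}(\eta^{x,y})
$$
separately for each of the three pieces of the generator $\mcb L_N$, since reversibility of the whole chain is equivalent to reversibility with respect to each elementary transition.

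First I would handle a bulk bond $(x,x+1)$. Using $\binom{\alpha}{k-1} = \binom{\alpha}{k}\tfrac{k}{\alpha-k+1}$ and $\binom{\alpha}{k+1}=\binom{\alpha}{k}\tfrac{\alpha-k}{k+1}$, a direct computation gives
$$
\frac{\nu^N_\varrho(\eta^{x,x+1})}{\nu^N_\varrho(\eta)} \;=\; \frac{\eta(x)\,(\alpha-\eta(x+1))}{(\eta(x+1)+1)\,(\alpha-\eta(x)+1)},
$$
where the $\varrho$ and $(1-\varrho)$ factors cancel because the bulk jump preserves the total number of particles. Since $c_{x,x+1}(\eta)=\eta(x)[\alpha-\eta(x+1)]$ and $c_{x+1,x}(\eta^{x,x+1})=(\eta(x+1)+1)[\alpha-\eta(x)+1]$, detailed balance holds in the bulk for every $\varrho\in(0,1)$; in particular this step will not pin down $\varrho$.

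Next I would treat the left boundary, checking the balance between $c_{0,1}(\eta)=\epsilon[\alpha-\eta(1)]$ and $c_{1,0}(\eta^{0,1})=\gamma(\eta(1)+1)$ (on the complementary configurations the corresponding rates vanish, so the equation is trivial there). The analogous binomial identity now yields
$$
\frac{\nu^N_\varrho(\eta^{0,1})}{\nu^N_\varrho(\eta)}\;=\;\frac{\alpha-\eta(1)}{\eta(1)+1}\cdot\frac{\varrho}{1-\varrho},
$$
and substituting in the detailed balance equation reduces it, after cancellation of the combinatorial factors, to $\epsilon(1-\varrho)=\gamma\varrho$, i.e. $\varrho=\epsilon/(\epsilon+\gamma)$. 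The right boundary is symmetric: the same manipulation for $c_{N-1,N}$ and $c_{N,N-1}$ produces the condition $\beta\varrho=\delta(1-\varrho)$, i.e. $\varrho=\delta/(\delta+\beta)$. Thus both boundary detailed balance equations are satisfied exactly under the stated hypothesis $\varrho=\epsilon/(\epsilon+\gamma)=\delta/(\delta+\beta)$, which proves reversibility.

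There is really no obstacle here: the argument is a bookkeeping exercise with the Binomial weights and the jump rates. The only point that requires a little care is the boundary move, where one must remember that the injection/annihilation ratios $\varrho/(1-\varrho)$ no longer cancel (because a particle is created or destroyed), so the choice of $\varrho$ is forced by the reservoir parameters.
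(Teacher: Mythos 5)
Your proof is correct: the paper does not prove this proposition itself (it defers to the cited reference), and your detailed-balance verification bond by bond — with the binomial ratio identities, the cancellation of the $\varrho$-factors in the bulk, and the boundary conditions forcing $\varrho=\epsilon/(\epsilon+\gamma)=\delta/(\delta+\beta)$ — is exactly the standard argument behind that citation. No gaps.
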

Hereafter we fix $T>0$ and a finite time horizon $[0,T]$.  We denote by $\mcb D([0,T],\Omega_N)$ as the space of càdlàg trajectories endowed with the Skorohod topology and $\mcb{M}$ as the space of non-negative Radon measures on $[0,1]$ with total mass bounded by $\alpha$ and equipped with the weak topology. For   $\eta \in \Omega_N$, we define the empirical measure $\pi^{N}(\eta,du)$ by 
\begin{equation*}\label{MedEmp}
\pi^{N}(\eta, du):=\dfrac{1}{N-1}\sum _{x\in\Lambda_N }\eta(x)\delta_{\frac{x}{N}}\left( du\right) \in \mcb{M},
 \end{equation*}
where $\delta_{b}$ is a Dirac measure in $b \in [0,1]$. For every $G: [0,1] \rightarrow \mathbb{R}$, we denote the integral of $G$ with respect to $\pi^N$ by $\langle \pi^N, G \rangle$
and we observe that 
$$ \langle \pi^N, G \rangle := \frac{1}{N-1} \sum_{x\in\Lambda_N } \eta(x) G \left( \tfrac{x}{N}  \right).
$$
We also define $\pi^{N}_{t}(\eta, du):=\pi^{N}(\eta_{tN^2}, du) $.

\begin{definition}
Let $\mathfrak g: [0,1]\rightarrow [0,\alpha]$ be a measurable function. We say that a sequence  of probability measures {$(\mu_N)_{N \geq 1}$} on $\Omega_N$  is {associated to the profile $\mathfrak g$} if for every continuous  function $G$ and $\delta >0$, it holds
$$
\lim_{N \rightarrow \infty} \mu_N \Big( \eta \in \Omega_N: \big| \langle \pi^N, G \rangle - \int_0^1 G(u) \mathfrak g (u) du \big| > \delta \Big) =0. 
$$
\end{definition}
For every $N \geq 1$, let $\mathbb{P} _{\mu_{N}}$ be the probability measure on $\mcb D([0,T],\Omega_N)$ induced by the Markov process \pat{$(\eta_{tN^2})_{t\geq 0}$} and by the initial  measure $\mu_{N}$ and the expectation with respect to $\mathbb{P}_{\mu_{N}}$ is denoted  by $\mathbb{E}_{\mu_{N}}$.  We denote by $\mcb D([0,T], \mcb{M})$ the space of càdlàg trajectories endowed with the Skorohod topology and   $(\mathbb{Q}_{N})_{n \geq 1}$ as the sequence of probability measures on $\mcb D([0,T],\mcb M)$ induced by the Markov process $(\pi_{t}^{N})_{0 \leq t \leq T}$ and by the initial measure  $\mu_{N}$.

\subsection{Hydrodynamic equations}

In order to properly introduce our notions of solutions, which are in the weak sense, we first need to define the set of test functions. For $m,n\in\mathbb N_0$,  let $C^{m,n}([0, T] \times [0,1])$ be the set of continuous functions defined on $[0, T] \times [0,1] $ that are $m$ times differentiable on the first variable and $n$ times differentiable on the second variable, and with continuous derivatives. We also denote  $C^{m,n}_c ([0,T] \times [0,1])$ as the set of  functions $G \in C^{m,n}([0, T] \times[0, 1])$ such that for each time $s$,  $G_s$ has a compact support included in $(0,1)$ and we denote by $C_{c}^{m}(0,1)$ (resp. $C_c^\infty (0,1)$) the set of all $m$ continuously differentiable (resp. smooth) real-valued  functions defined on $(0,1)$ with compact support. The supremum norm is denoted by $\| \cdot \|_{\infty}$. Now we define the Sobolev space $\mcb{H}^1$ on $[0,1]$. For that purpose, we define the semi inner-product $\langle \cdot, \cdot \rangle_{1}$ on the set $C^{\infty} ([0,1])$ by 
$\langle G, H \rangle_{1} = \langle\partial_u G,\partial_u H\rangle
$
for $G,H\in C^{\infty} ([0,1])$
and  the corresponding semi-norm is denoted by $\| \cdot \|_{1}$.  Above $\langle\cdot,\cdot\rangle$ corresponds to the inner product in $\mathbb L^2([0,1])$  and should not be mistaken with $\langle \pi^N, G \rangle$.  The corresponding norm is denoted by $\| \cdot \|_{L^2}^2 .$

\begin{definition}
\label{Def. Sobolev space}
The Sobolev space $\mcb{H}^{1}$ on $[0,1]$ is the Hilbert space defined as the completion of $C^\infty ([0,1])$ for the norm 
$$\| \cdot\|_{{\mcb{H}}^1}^2 :=  \| \cdot \|_{L^2}^2  +  \| \cdot \|^2_{1}$$ and we note that its elements coincide a.e. with continuous functions. The space $L^{2}(0,T;\mcb{H}^{1})$ is the set of measurable functions $f:[0,T]\rightarrow  \mcb{H}^{1}$ such that 
$\int^{T}_{0} \Vert f_{s} \Vert^{2}_{\mcb{H}^{1}}ds< \infty.$
\end{definition}
Let  $\mathfrak g:[0,1]\rightarrow [0,\alpha]$ be a measurable function which will be the initial condition  in  all our equations.  We use the notation
\pat{$$
\rho_-:=\alpha \frac{\epsilon}{\epsilon+\gamma}\quad \textrm{and}\quad  
\rho_+:=\alpha \frac{\delta}{\delta + \beta}
$$
to identify, respectively,} the left and right density of the boundary reservoirs.
\begin{definition}
\label{Def. Dirichlet source Condition-g_ssep}
We say that  $\rho:[0,T]\times[0,1] \to [0,\alpha]$ is a weak solution of the heat equation with Dirichlet boundary conditions
 \begin{equation}\label{eq:Dirichlet source Equation-g_ssep}
 \begin{cases}
 &\partial_{t}\rho_{t}(u)=\alpha\Delta\, {\rho} _{t}(u), \quad (t,u) \in [0,T]\times(0,1),\\
 &{ \rho} _{t}(0)=\rho_-, \quad { \rho}_{t}(1)=\rho_+,\quad t \in (0,T],
 \end{cases}
\end{equation}
if $\rho \in L^{2}(0,T;\mcb{H}^{1})$, ${ \rho} _{t}(0)=\rho_-$ and ${ \rho}_{t}(1)=\rho_+$ for a.e. $t \in (0,T]$, and  for all $t\in [0,T]$ and any $G \in C_c^{1,2} ([0,T]\times[0,1])$ it holds
$$\langle \rho_{t},  G_{t}\rangle  -\langle \mathfrak g,   G_{0} \rangle
- \int_0^t\langle \rho_{s},\Big(\alpha\Delta + \partial_s\Big) G_{s}\rangle ds=0.
$$
\end{definition}
\begin{definition}
\label{Def. Robin Condition-g_ssep} Let $\kappa\geq0$. 
We say that $\rho:[0,T]\times[0,1] \to [0,\alpha]$ is a weak solution of the heat equation with Robin  boundary conditions 
 \begin{equation}\label{Robin Equation-g_ssep}
 \begin{cases}
 &\partial_{t}\rho_{t}(u)=\alpha\Delta\, {\rho} _{t}(u), \quad (t,u) \in [0,T]\times(0,1),\\ &\partial_{u}\rho _{t}(0)=\kappa \frac{\epsilon + \gamma}{\alpha}\Big(\rho_{t}(0) -\rho_-\Big),\quad \partial_{u} \varrho_{t}(1)= \kappa \frac{\delta + \beta}{\alpha} \Big(\rho_+-\rho_t(1)\Big),\quad t \in (0,T],
 \end{cases}
 \end{equation}
if $\rho \in L^{2}(0,T;\mcb{H}^{1})$ and for all $t\in [0,T]$ and any $G \in C^{1,2} ([0,T]\times[0,1])$ it holds
\begin{equation*}\label{eq:Robin integral-g_ssep}
\begin{split}
\langle \rho_{t},  G_{t} \rangle -\langle \mathfrak g,   G_{0} &\rangle   - \int_0^t\langle\rho_{s}, \Big(\alpha\Delta + \partial_s\Big) G_{s} \rangle ds + \alpha \int^{t}_{0} {\left[   \rho_{s}(1)\partial_uG_{s}(1)-\rho_{s}(0) \partial_u G_{s}(0)  \right] } \, ds\\
& \qquad-\kappa \int^{t}_{0}  \left[ G_{s}(0)(\epsilon + \gamma)\Big(\rho_{s}(0) -\rho_-\Big) +G_{s}(1) (\delta + \beta) \Big(\rho_+ - \rho_s(1)\Big)  \right] \,  ds=0.
\end{split}   
\end{equation*}
\end{definition}
Taking $\kappa=0$ in Definition \ref{eq:Robin integral-g_ssep}, we get the heat equation with Neumann boundary conditions.
\begin{remark}
We observe that the uniqueness of the weak solutions as given above can be seen, for example, in \cite{baldasso2017exclusion,byronMPRF} and we refer the interested reader to those articles for a proof. 
\end{remark}
We observe that the solution of the hydrodynamic equation $\rho_t(u)$ takes values in $[0,\alpha]$ and should not be confused with the function \pat{$\varrho(\cdot)$, defined in  \eqref{reference_measure},  that takes values in $[0,1]$.} 
\subsection{Hydrodynamic limit} Now we state our main result. 
\begin{theorem}
\label{th:hyd_ssep}
Let $\mathfrak g:[0,1]\rightarrow[0,\alpha]$ be a measurable function and let \pat{$(\mu _{N})_{N\geq 1}$} be a sequence of probability measures  associated to $\mathfrak g$. For any $t\in[0,T]$ and any $G\in  C^0([0,1])$, it holds
\begin{equation*}\label{limHidreform}
 \lim _{N\to\infty } \mathbb P_{\mu _{N}}\big( \eta_{\cdot} : \Big|\dfrac{1}{N-1}\sum_{x \in \Lambda_{N} }G\left(\tfrac{x}{N} \right)\eta_{tN^2}(x) - \langle G,\rho_{t}\rangle\Big|    > \delta \Big)= 0,
\end{equation*}
where  $\rho_{t}(\cdot)$ is the unique weak solution of : 

a)  \eqref{eq:Dirichlet source Equation-g_ssep},  if $\theta<1$;

b)  (\ref{Robin Equation-g_ssep}) with $\kappa=1$ if $\theta =1$, and with $\kappa=0$ if $\theta>1$.
\end{theorem}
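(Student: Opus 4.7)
The plan is to follow the entropy method of Guo--Papanicolaou--Varadhan, adapting to $\alpha\geq 2$ the scheme used in \cite{baldasso2017exclusion, byronMPRF} for $\alpha=1$. For a test function $G \in C^{1,2}([0,T]\times[0,1])$ chosen according to the regime of $\theta$ (compactly supported in $(0,1)$ for $\theta<1$, generic for $\theta\geq 1$), Dynkin's formula yields a martingale
$$
M^{N,G}_t = \langle \pi^N_t, G_t\rangle - \langle \pi^N_0, G_0\rangle - \int_0^t (\partial_s + N^2 \mcb{L}_N)\langle \pi^N_s, G_s\rangle\, ds
$$
whose $L^2$ norm vanishes as $N\to\infty$. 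The core computation is the identification of $N^2 \mcb{L}_N \langle \pi^N, G\rangle$ with the relevant weak formulation. Crucially, SEP($\alpha$) is a gradient system: the rates satisfy $c_{x,x+1}(\eta)-c_{x+1,x}(\eta)=\alpha[\eta(x)-\eta(x+1)]$, so $\mcb{L}_{bulk}\eta(y)=\alpha[\eta(y-1)-2\eta(y)+\eta(y+1)]$ in the interior, and two discrete summations by parts produce, in the limit, the expected term $\alpha\langle \rho_s,\Delta G_s\rangle$, together with explicit boundary corrections at $x=1,N-1$ of order $N$.

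The contributions of $N^2\mcb{L}_\ell$ and $N^2\mcb{L}_r$ are of order $N^{1-\theta}$, and after rewriting $c_{0,1}(\eta)-c_{1,0}(\eta)=\epsilon\alpha-(\epsilon+\gamma)\eta(1)$ and its mirror at the right boundary, they combine with the order-$N$ bulk boundary terms in exactly the way that distinguishes the three regimes: for $\theta<1$ the reservoir dominates, so choosing $G_s$ vanishing on $\{0,1\}$ and replacing $\eta(1),\eta(N-1)$ by their fixed-point values $\rho_-,\rho_+$ via the fast replacement lemma forces the Dirichlet trace; for $\theta=1$ the two orders match and replacing $\eta(1),\eta(N-1)$ by local box averages that converge to $\rho_s(0),\rho_s(1)$ produces precisely the Robin identity with $\kappa=1$; for $\theta>1$ the reservoir contribution vanishes and only the bulk boundary terms survive, yielding Neumann. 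I expect the main obstacle to be these two replacement lemmas of Section \ref{section4}: since $\eta(x)\in\{0,\dots,\alpha\}$, the Glauber injection/absorption is not a simple flip and the identity $\eta(x)^2=\eta(x)$ is lost, so the algebraic tricks available for $\alpha=1$ break down. The way out, as the authors indicate, is to partition $\Omega_N$ according to the value $\eta(1)\in\{0,\dots,\alpha\}$ so that creation and annihilation currents recombine into a single affine function of $\eta(1)$ that vanishes exactly at $\rho_-$, which is what makes the Dirichlet-form bound underlying the replacement lemma close.

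Finally, tightness of the laws $(\mathbb{Q}_N)$ on $\mcb{D}([0,T],\mcb{M})$ follows from Aldous's criterion applied to $\langle \pi^N_t, G\rangle$ for smooth $G$, using the uniform bound $\eta(x)\leq \alpha$ to control both the generator integral and the quadratic variation of $M^{N,G}$; absolute continuity of limit points with respect to Lebesgue measure is then automatic, and the density takes values in $[0,\alpha]$. To place the limiting density in $L^2(0,T;\mcb{H}^1)$, as required by Definitions \ref{Def. Dirichlet source Condition-g_ssep} and \ref{Def. Robin Condition-g_ssep}, I would establish an energy estimate from the Dirichlet form of SEP($\alpha$), exploiting its carr\'e du champ $\sum_x \eta(x)[\alpha-\eta(x+1)](\nabla f)^2$; the uniform bound on occupations again keeps the $\alpha\geq 2$ case essentially parallel to the $\alpha=1$ one. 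Combining the martingale convergence with the two replacement lemmas, any limit point of $(\mathbb{Q}_N)$ is concentrated on weak solutions of the PDE corresponding to the regime of $\theta$; uniqueness of those solutions (cited in the remark after Definition \ref{Def. Robin Condition-g_ssep}) identifies the limit with $\rho_t(u)$, and a standard density argument extends the convergence from smooth $G$ to arbitrary $G\in C^0([0,1])$, which is the statement of Theorem \ref{th:hyd_ssep}.
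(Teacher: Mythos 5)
Your proposal follows essentially the same route as the paper: the GPV entropy method with Dynkin's formula, the gradient identity $c_{x,x+1}-c_{x+1,x}=\alpha[\eta(x)-\eta(x+1)]$ and discrete summation by parts, tightness via the Aldous/Kipnis--Landim criterion, the energy estimate from the Dirichlet form, and, crucially, the two replacement lemmas proved by partitioning $\Omega_N$ according to the occupation at the boundary site so that creation and annihilation terms recombine — which is exactly the paper's key device. The plan is correct and matches the paper's argument in all essential respects.
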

\section{Proof of Theorem \ref{th:hyd_ssep}} \label{section3}
The strategy of the proof follows from the entropy method introduced in \cite{GPV} and it consists in two ingredients. First, we show in Subsection  \ref{sec:tightness} that the sequence $(\mathbb{Q}_{N})_{n \geq 1}$ is tight with respect to the Skorohod topology in 
$\mcb D([0,T], \mcb M)$. From this  we know that there exists a limit point $\mathbb Q^*$ that we want to characterize uniquely, so that convergence of the whole sequence follows. Second, in Subsection \ref{sec:charac}, we characterize the limit points. 
Since $\eta_t(x)$ is bounded, the limit point $\mathbb Q^*$ is supported on trajectories of measures that are absolutely continuous with respect to Lebesgue, i.e. $\pi_t(du)=\rho_t(u)du$. Then we show that the density satisfies the integral formulation of the corresponding equations by using Dynkin's formula and the Replacement Lemmas of Section \ref{section4}. In order to show  that $  \rho \in L^2(0,T;\mcb H^1)$, we refer to \cite{baldasso2017exclusion} since, once we have the replacement lemma given in Lemma \ref{repl_lemma_ave}, the proof is similar to the one in \cite{baldasso2017exclusion}.  Finally, the convergence follows by the uniqueness of the weak solution of the hydrodynamic equation. We observe however that in the Dirichlet case we also have to prove that ${ \rho} _{t}(0)=\rho_-$ and ${ \rho}_{t}(1)=\rho_+$ for a.e. $t\in(0,T]$ and this follows easily from the combination of  Lemmas \ref{rep_lemma} and \ref{repl_lemma_ave}, for details we refer the reader to Section 5.3 of \cite{byronMPRF}.

\subsection{Tightness}
\label{sec:tightness}
This section is devoted to the proof of tightness of the sequence $(\mathbb{Q}_{N})_{ N \geq 1 }$. From  Proposition 4.1.6  of \cite{kipnis1998scaling},  it is enough to show that, for every $\varepsilon >0$ and any function $G\in C^0_c([0,1])$,
\begin{equation} \label{T1sdif}
\displaystyle \lim _{\delta \rightarrow 0^+} \limsup_{N \rightarrow\infty} \sup_{\tau  \in \mcb{T}_{T},\bar\tau \leq \delta} {\mathbb{P}}_{\mu _{N}}\Big[\eta_{\cdot}\in \mcb D ( [0,T], \Omega_N) :\Big|\langle\pi^{N}_{\tau+ \bar\tau},G\rangle-\langle\pi^{N}_{\tau},G\rangle\Big| > \epsilon \Big]  =0, 
\end{equation}
where $\mcb{T}_T$ represents  the set of stopping times bounded by $T$.
By an $\mathbb L^1$ approximation procedure,  it is enough to show the
last result  for $G \in C_{c}^2([0,1])$. From Dynkin's formula, see, for example  Appendix 1 of \cite{kipnis1998scaling},  for every $t \geq 0$ and $G$ sufficiently smooth
\begin{equation} \label{defMnt}
\mcb M_{t}^{N}(G) = \langle \pi_{t}^{N},G \rangle - \langle \pi_{0}^{N},G  \rangle  - \int_0^t N^2 \mcb {L}_N \langle  \pi_{s}^{N},G  \rangle ds,
\end{equation}
is a martingale.  We observe that \eqref{T1sdif} is a direct consequence of  the next result combined with  Markov's inequality.
\begin{proposition} \label{tightcond1dif}
For $G \in C_{c}^2([0,1])$, it holds
\begin{align*}
\lim_{\delta \rightarrow 0^+} \limsup_{N \rightarrow \infty} \sup_{\tau \in \mcb{T}_T, \bar{\tau} \leq \delta} \mathbb{E}_{\mu_N} \left[ \Big| \int_{\tau}^{\tau+ \bar{\tau}} N^2  \mcb L_{N}\langle \pi_{s}^{N},G\rangle ds \Big| \right] = 0.\\
\lim_{\delta \rightarrow 0^+} \limsup_{N \rightarrow \infty} \sup_{\tau \in \mcb{T}_T, \bar{\tau} \leq \delta} \mathbb{E}_{\mu_N} \left[ \left( \mcb M_{\tau}^{N}(G) -  \mcb M_{\tau+\bar{\tau}}^{N}(G) \right)^2 \right] = 0.
\end{align*}
\end{proposition}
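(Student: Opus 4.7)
\medskip

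\noindent\textbf{Plan.} I would treat the two displays separately; both are standard Aldous-type tightness bounds and both are significantly simplified by the hypothesis $G \in C_c^2([0,1])$. Indeed, since $G$ vanishes on a neighbourhood of $\{0,1\}$, for all $N$ sufficiently large one has $G(1/N)=G(2/N)=G((N-2)/N)=G((N-1)/N)=0$, so the boundary generators $\mcb{L}_\ell$ and $\mcb{L}_r$ contribute exactly zero when applied to $\langle \pi^N, G\rangle$; in particular, the parameter $\theta$ plays no role in either estimate.

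For the first display, the gradient identity $c_{x,x+1}(\eta)-c_{x+1,x}(\eta)=\alpha[\eta(x)-\eta(x+1)]$ yields, by direct computation,
\[
N^2\mcb{L}_N\langle\pi^N,G\rangle \;=\; \frac{\alpha N^2}{N-1}\sum_{x=1}^{N-2}\big[G(\tfrac{x+1}{N})-G(\tfrac{x}{N})\big]\big[\eta(x)-\eta(x+1)\big].
\]
A discrete Abel summation rewrites this as $\tfrac{\alpha}{N-1}\sum_{x=2}^{N-2} N^2\Delta^N_x G\,\eta(x)$, where $\Delta^N_x G := G((x+1)/N)-2G(x/N)+G((x-1)/N)$, and the two surface terms drop out for $N$ large by the compact support of $G$. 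Since $|N^2\Delta^N_x G|\leq \|G''\|_\infty + o_N(1)$ uniformly in $x$ and $|\eta(x)|\leq\alpha$, one obtains a deterministic, $\eta$-uniform bound $|N^2\mcb{L}_N\langle\pi^N,G\rangle|\leq C(G,\alpha)$. Integrating over $[\tau,\tau+\bar\tau]$ and taking expectations gives the first display, with upper bound $C(G,\alpha)\delta$.

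For the second display, optional sampling at the bounded stopping times $\tau\leq\tau+\bar\tau\leq T$ yields $\mathbb{E}_{\mu_N}[(\mcb{M}^N_{\tau+\bar\tau}(G)-\mcb{M}^N_\tau(G))^2]=\mathbb{E}_{\mu_N}[\langle \mcb{M}^N(G)\rangle_{\tau+\bar\tau}-\langle \mcb{M}^N(G)\rangle_\tau]$. The carr\'e du champ formula applied to $f=\langle\cdot,G\rangle$, the telescoping identity $[f(\eta^{x,x\pm 1})-f(\eta)]^2=(N-1)^{-2}[G(\tfrac{x\pm 1}{N})-G(\tfrac{x}{N})]^2$, the rate bound $c_{x,x\pm 1}(\eta)\leq\alpha^2$, and the vanishing of the boundary contributions together give the pointwise estimate $N^2\,|\mcb{L}_N\langle\pi^N,G\rangle^2 - 2\langle\pi^N,G\rangle\,\mcb{L}_N\langle\pi^N,G\rangle|\leq C\alpha^2\|G'\|_\infty^2/N$, and integration produces the bound $C\alpha^2\|G'\|_\infty^2\,\delta/N$, which vanishes as $N\to\infty$. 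There is no genuine conceptual obstacle: this is a routine Aldous--Rebolledo computation, and the compact support of $G$ bypasses the one potentially delicate point, namely the interaction with the $\theta$-dependent reservoirs; the subtler replacement arguments needed later to identify the limit are not required at the tightness stage.
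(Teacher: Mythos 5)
Your proposal is correct and follows essentially the same route as the paper: a deterministic uniform bound $|N^2\mcb{L}_N\langle\pi^N,G\rangle|\leq C$ obtained by summation by parts (with the boundary and surface terms killed by the compact support of $G$) for the first limit, and the predictable quadratic variation together with the carr\'e du champ computation for the second. The only cosmetic difference is that the paper writes out the boundary contributions explicitly before observing they vanish, whereas you discard them at the outset.
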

\begin{proof}
The proof of the first limit  follows by proving that 
$N^2  |\mcb{L}_N \langle \pi_{s}, G \rangle| \leq C$. 
    A simple, but long, computation shows that
    \begin{align*}
        N^2 \mcb{L}_N \langle& \pi^N_s, G \rangle = \alpha  \frac{1}{N-1} \sum_{x=1}^{N-1} \eta_{sN^2}(x) \Delta_N G\left(\tfrac{x}{N} \right) \\&+ \frac{\epsilon + \gamma}{(N-1)N^\theta} N^2 \left[\rho_- - \eta_{sN^2}(1) \right] G\left(\tfrac{1}{N} \right) +\alpha \frac{N^2}{N-1} \eta_{sN^2}(1) \left( G\left(\tfrac{1}{N}\right) - G\left(0\right) \right) \\
        &+ \frac{\delta + \beta}{(N-1)N^\theta} N^2 \left[ \rho_+- \eta_{sN^2}(N-1) \right] G\left(\tfrac{N-1}{N} \right)+\alpha \frac{N^2}{N-1} \eta_{sN^2}(N-1) \left( G\left(\tfrac{N-1}{N} \right) - G\left(1 \right)\right). 
    \end{align*}
    From last identity together with the fact that $G \in C_{c}^2([0,1])$ and $|\eta_s(x)|\leq \alpha$ for all $x\in\Lambda_N$ and $s\in[0,T]$, the result easily follows. 
To prove the second limit we note that
$$
\mathbb{E}_{\mu_N} \left[ \left( \mcb M_{\tau}^{N}(G) -  \mcb M_{\tau+\bar{\tau}}^{N}(G) \right)^2 \right] = \mathbb{E}_{\mu_N} \Big[ \int_{\tau}^{\tau+\bar{\tau}}N^2 \left(\mcb L_{N}[\langle \pi_{s}^{N},G \rangle ^{2}]-2\langle \pi_{s}^{N},G \rangle \mcb L_{N} \langle \pi_{s}^{N},G \rangle\right) ds \Big].
$$
Above we used the fact that
\begin{equation}
\mcb{N}^N_t(G) = \left[\mcb M^N_t(G)\right]^2 - \int_0^t \left[N^2\mcb L_{N} \langle \pi_{s}^{N},G \rangle^2 - \langle \pi_{s}^{N},G \rangle N^2\mcb L_{N} \langle \pi_{s}^{N},G \rangle \right] ds
\end{equation} 
is a martingale (see, e.g.,  Appendix 1 of \cite{kipnis1998scaling}).
A simple, but long, computation  shows that, for $\eta \in \Omega_N$, it holds
\begin{align*}
        &N^2 [\mcb{L}_N\langle \pi^N, G \rangle^2 - 2 \langle \pi^N, G \rangle \mcb{L}_N\langle \pi^N, G \rangle] \\
        &= \frac{N^2}{(N-1)^3} \sum_{x=1}^{N-2} \Big\{ \eta(x)[\alpha - \eta(x+1)] + \eta(x+1)[\alpha - \eta(x)]  \Big\} \left[ G \left( \tfrac{x+1}{N}\right) -  G \left( \tfrac{x}{N}\right)\right]^2 \\
        &+ \frac{N^2}{N^\theta (N-1)^2} \Big[ \left(\gamma \eta(1) + \epsilon [\alpha-\eta(1)] \right) G^2 \left(\tfrac{1}{N}\right) + \left(\beta \eta(N-1) + \delta [\alpha-\eta(N-1)] \right) G^2 \left(\tfrac{N-1}{N} \right) \Big].
\end{align*}
Since $G \in C_{c}^2([0,1])$, last display vanishes as $N\to+\infty$,   from where the proof ends. 
       \end{proof}
\subsection{Characterization of limit points}\label{sec:charac}
In this subsection we explain how to get the integral notions of weak solutions from the microscopic system. Our starting point is Dynkin's formula which tells us that for  $G$  regular enough
\begin{align*}
    &\mcb{M}^N_t(G) := \langle \pi^N_t, G_t \rangle - \langle \pi^N_0, G_0 \rangle - \int_0^t \langle \pi^N_s, \partial_s G_s \rangle + \alpha \langle \pi^N_s, \Delta_N G_s \rangle ds \\
    &- \int_0^t \left\{\frac{\epsilon + \gamma}{(N-1)N^\theta} N^2 \left[\rho_-- \eta_{sN^2}(1) \right] G_s\left(\tfrac{1}{N} \right) +\alpha \frac{N}{N-1} \eta_{sN^2}(1) \nabla^+_N  G_s\left(0\right)  \right\} ds \\
    &+ \int_0^t \left\{ \alpha \frac{N}{N-1} \eta_{sN^2}(N-1) \nabla^+_N  G_s\left(\tfrac{N-1}{N} \right) - \frac{\delta + \beta}{(N-1)N^\theta} N^2 \left[ \rho_+- \eta_{sN^2}(N-1) \right] G_s\left(\tfrac{N-1}{N} \right) \right\} ds
\end{align*}
is a martingale. Above we used the usual notations for the discrete laplacian and derivative, i.e.
\begin{equation*}
    \Delta_N G\left(\tfrac{x}{N} \right) := N^2\left[ G\left(\tfrac{x-1}{N} \right) - 2 G\left(\tfrac{x}{N} \right) + G\left(\tfrac{x+1}{N} \right) \right]\quad 
\textrm{and}\quad
    \nabla^+_N G\left( \tfrac{x}{N} \right) := N \left[ G\left(\tfrac{x+1}{N} \right) - G\left(\tfrac{x}{N} \right)\right].
\end{equation*}
 From the computations of the previous proof we know that the martingale $\mcb M^N_t (G)$ vanishes in $\mathbb L^2(\mathbb P_{\mu_N})$ as $N\to+\infty$. It remains to analyse the other terms. From here on we take into account the value of $\theta.$ We present the argument for $\theta~\geq 1 $, since for $\theta < 1$ the test function $G \in C_c^{1,2}([0,T]\times[0,1])$  and the argument is much simpler.  Now,  for  $\theta
\geq 1$ we assume that $G \in C^{1,2}([0,T]\times[0,1])$. We start with $\theta>1$  and we see that
 \begin{equation*}\begin{split}
    \mcb{M}^N_t(G) =& \langle \pi^N_t, G_t \rangle - \langle \pi^N_0, G_0 \rangle - \int_0^t \langle \pi^N_s, (\partial_t G_s + \alpha \Delta_N G_s )\rangle ds \\
    -& \int_0^t \left\{\alpha  \eta_{sN^2}(1) \nabla^+_N  G_s\left(0\right) -\alpha  \eta_{sN^2}(N-1) \nabla^+_N  G_s\left(\tfrac{N-1}{N} \right)  \right\} ds + O(N^{1-\theta})
\end{split}\end{equation*}
is a martingale.
Since $G$ is of class $C^2([0,1])$, from Lemma \ref{repl_lemma_ave}, last identity is equivalent to
 \begin{equation*}\begin{split}
    \mcb{M}^N_t(G) =& \langle \pi^N_t, G_t \rangle - \langle \pi^N_0, G_0 \rangle - \int_0^t \langle \pi^N_s, (\partial_t G_s +\alpha \Delta_N G_s) \rangle ds \\
    -& \int_0^t \left\{\alpha  \eta_{sN^2}^{\epsilon N}(1) \nabla^+_N  G_s\left(0\right) -\alpha  \eta_{sN^2}^{\epsilon N}(N-1) \nabla^+_N  G_s\left(\tfrac{N-1}{N} \right)  \right\} ds + O(N^{1-\theta})+o(1),
\end{split}\end{equation*}
plus terms whose $\mathbb L^1(\mathbb P_{\mu_N})$-norm vanishes as $N\to+\infty$ and then $\epsilon \to 0$. Now fix $\theta=1$. Again from Lemma \ref{repl_lemma_ave} we get that 
\begin{align*}
    \mcb{M}^N_t(G) &= \langle \pi^N_t, G_t \rangle - \langle \pi^N_0, G_0 \rangle - \int_0^t \langle \pi^N_s, ( \partial_t G_s + \alpha\Delta_N G_s) \rangle ds \\
    &- \int_0^t \left\{(\epsilon + \gamma) \left[\rho_-- \eta_{sN^2}^{\epsilon N}(1) \right] G_s\left(\tfrac{1}{N} \right) +\alpha  \eta_{sN^2}^{\epsilon N}(1) \nabla^+_N  G_s\left(0\right)  \right\} ds \\
    &+ \int_0^t \left\{ \alpha \eta_{sN^2}^{\epsilon N}(N-1) \nabla^+_N  G_s\left(\tfrac{N-1}{N} \right) - (\delta + \beta) \left[ \rho_+- \eta_{sN^2}^{\epsilon N}(N-1) \right] G_s\left(\tfrac{N-1}{N} \right) \right\} ds+o(1).
\end{align*}
Now we can conclude the argument in  all cases. To recognize the limiting equations it is enough to observe the following facts. First,  note that for any $x\in\Lambda_N$ it holds  $\eta_t^{\epsilon N}(x)=\langle \pi^N_t, \iota_\epsilon^{x/N}\rangle$, where for $v\in[0,1]$, $\iota_\epsilon^{v}(u)=\epsilon^{-1}\mathbb {1}_{(v,v+\epsilon]}(u)$. From tightness, we know the convergence of a subsequence $\pi_t^{N_k}(\eta, du)$ to $\pi_t(du)$, given  by $\pi_t(du)=\rho_t(u)du$. 
Moreover, since $\rho_t(u)\in[0,\alpha]$ for all $t\in[0,T]$ and $u\in[0,1]$, from Lebesgue's differentiation theorem we can conclude that 
$$\lim_{\epsilon\to 0}\Big|\rho_s(u)-\tfrac{1}{\epsilon}\int_{u}^{u+\epsilon} \rho_s(v)dv\Big| =0$$
for almost every $u\in[0,1]$. But to recognize the boundary conditions we need the last result to be true for the boundary points $u=0$ and $u=1$.  This is easy and it is analogous to the proof of Lemma 6.2 in \cite{BDGN}. We leave the details to the reader. 
\section{Replacement lemmas} \label{section4}
In this section we {collect} several results that were used along the proofs. From here on we assume that  $\varrho(\cdot)$ is a profile  bounded away from zero and one, i.e. there exist $0<a< b<1 $ such that, for all $x\in\Lambda_N$, it holds 
$$
    0 < a \leq  \varrho\left( \tfrac{x}{N}\right) \leq b < 1.
$$
We claim that under the previous conditions,  for any probability measure $\mu_N$,  the entropy of $\mu_N$ with respect to \pat{the Binomial  product measure of parameters ($\alpha$, $\varrho(\tfrac xN)$)} , i.e. the measure $\nu^N_{\varrho(\cdot)}$ given in \eqref{reference_measure}, satisfies $$
    H(\mu_N | \nu^N_{\varrho(\cdot)}) \lesssim N. $$
To prove the claim, observe that by the definition of the entropy, since $\mu_N$ is a probability measure, we have  
\begin{align*}
    H\left(\mu_N  | \nu^N_{\varrho(\cdot)} \right) &= \sum_{\eta \in \Omega_N} \mu_N(\eta) \log \left( \frac{\mu_N(\eta)}{\nu^N_{\varrho(\cdot)}(\eta)} \right) \nu^N_{\varrho(\cdot)}(\eta)  \leq \max_{\eta \in \Omega_N} \left\{ \log \left[\nu^N_{\varrho(\cdot)}(\eta)\right]^{-1} \right\}.
\end{align*}
For $\eta \in \Omega_N$, recalling the expression of $\nu^N_{\varrho(\cdot)}$ given in \eqref{reference_measure}, we have 
\begin{align*}
    \log \left[\nu^N_{\varrho(\cdot)}(\eta)\right]^{-1} &= \sum_{x=1}^{N-1} \log \left[ \frac{\eta(x)! [\alpha - \eta(x)]!}{\alpha!} \right] + \eta(x) \log \left[ \varrho \left( \tfrac{x}{N}\right)^{- 1} \right] + [\alpha - \eta(x)]\log \left[1 - \varrho \left( \tfrac{x}{N}\right)\right]^{-1}\\
    &\leq (N-1)\left\{ \log \left[ \alpha! \right] + \alpha \log a^{- 1} + \alpha \log \left[1 - b\right]^{-1} \right\}.
\end{align*} 
\subsection{Estimating  Dirichlet forms}\label{sec:dir_form}
In this section we want to compare the Dirichlet form of our model with a quadratic form, \pat{given in terms of the carré du champ operator}. To properly do that we take as reference measure $\nu^N_{\varrho(\cdot)}$ given in  \eqref{reference_measure} and we consider a Lipschitz continuous profile $\varrho(\cdot)$ which is bounded away from zero and one and locally constant at the boundary satisfying $\varrho(0)=\frac{\epsilon}{\epsilon+\gamma}$ and $\varrho(1)=\frac{\delta}{\delta + \beta}$.
\begin{lemma}\label{lem:Dir}
For any density $f$, if $\varrho(\cdot)$ satisfies
 the previous conditions, then there exist constants $C, K >0$ and $N_0\in\mathbb N$, such that for $a\in\{\ell, r\}$, it holds
\pat{$$ \langle \mcb {L}_{a} \sqrt{f}, \sqrt{f} \rangle_{\nu^N_{\varrho(\cdot)}} =  - \frac{1}{2} D^{a}_{\nu^N_{\varrho(\cdot)}}(\sqrt{f}),\forall  N\geq N_0$$} and 
\begin{align*} \langle  \mcb {L}_{bulk} \sqrt{f}, \sqrt{f} \rangle_{\nu^N_{\varrho(\cdot)}} \leq - K  D^{bulk}_{\nu^N_{\varrho(\cdot)}}(\sqrt{f}) + \frac{C}{N^2}, ~\forall N\geq 1,
\end{align*}
where
        \begin{align*} 
        D_{\nu^N_{\varrho(\cdot)}}^\ell(\sqrt{f}) &:=\frac{1}{N^\theta} \int_{\Omega_N} \!\left[ c_{1,0}(\eta)\Big\{\!\sqrt{f}(\eta^{1,0}) - \!\sqrt{f}(\eta)\!\Big\}^2 + c_{0,1}(\eta)\Big\{\sqrt{f}(\eta^{0,1}) - \sqrt{f}(\eta)\Big\}^2 \right]d\nu^N_{\varrho(\cdot)} \\
        D^{bulk}_{\nu^N_{\varrho(\cdot)}}(\sqrt{f}) &:=\sum_{x = 1}^{N-2} D_ {\nu^N_{\varrho(\cdot)}}^{x,x+1}(\sqrt{f}) +D_{\nu^N_{\varrho(\cdot)} }^{x+1,x}(\sqrt{f} ) \\&= \sum_{x = 1}^{N-2} \Big\{\int_{\Omega_N} \! c_{x,x+1}(\eta) \Big\{ \!\sqrt{f}(\eta^{x,x+1}) - \! \sqrt{f}(\eta) \! \Big\}^2 d\nu^N_{\varrho(\cdot)}\\&\hspace*{1cm} +\int_{\Omega_N} \! c_{x+1,x}(\eta) \Big\{ \! \sqrt{f}(\eta^{x+1,x}) - \! \sqrt{f}(\eta) \! \Big\}^2 d\nu^N_{\varrho(\cdot)}\Big\}
    \end{align*}
    and the definition of  $D_{\nu^N_{\varrho(\cdot)}}^r(\sqrt{f})$  is analogous to the one of  $ D_{\nu^N_{\varrho(\cdot)}}^\ell(\sqrt{f}) $ by replacing $0$ and $1$ by $N$ and $N-1$, respectively.
 \end{lemma}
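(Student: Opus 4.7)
The argument splits into an exact identity at the boundary and an approximate one in the bulk.

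\emph{Boundary operators.} Since $\varrho$ is locally constant near $0$, say $\varrho\equiv\varrho(0)=\epsilon/(\epsilon+\gamma)$ on $[0,r]$ for some $r>0$, for all $N\geq 1/r$ the marginal of $\nu^N_{\varrho(\cdot)}$ at site $1$ is $\mathrm{Binomial}(\alpha,\epsilon/(\epsilon+\gamma))$. Direct inspection of the birth-death rates $c_{0,1}(\eta)=\epsilon[\alpha-\eta(1)]$ and $c_{1,0}(\eta)=\gamma\eta(1)$ shows detailed balance for this Binomial, so $\mcb L_\ell$ is self-adjoint in $L^2(\nu^N_{\varrho(\cdot)})$. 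The identity $2u(v-u)=(v^2-u^2)-(v-u)^2$ applied with $u=\sqrt{f(\eta)}$ and $v\in\{\sqrt{f(\eta^{1,0})},\sqrt{f(\eta^{0,1})}\}$ then produces the equality $\langle\mcb L_\ell\sqrt f,\sqrt f\rangle_{\nu^N_{\varrho(\cdot)}}=-\tfrac12 D^\ell_{\nu^N_{\varrho(\cdot)}}(\sqrt f)$. The same reasoning at $x=N-1$, using $\varrho(1)=\delta/(\delta+\beta)$, handles $\mcb L_r$.

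\emph{Bulk operator.} The reference measure fails reversibility for $\mcb L_{bulk}$ whenever $\varrho$ is not constant. Applying the same algebraic identity edge by edge gives
\[
2\langle \mcb L_{bulk}\sqrt{f},\sqrt{f}\rangle_{\nu^N_{\varrho(\cdot)}}=-D^{bulk}_{\nu^N_{\varrho(\cdot)}}(\sqrt{f})+R,
\]
with the defect of reversibility
\[
R=\sum_{x=1}^{N-2}\int\bigl[c_{x,x+1}(f(\eta^{x,x+1})-f(\eta))+c_{x+1,x}(f(\eta^{x+1,x})-f(\eta))\bigr]\,d\nu^N_{\varrho(\cdot)}.
\]
Changing variables $\eta\leftrightarrow \eta^{x,x+1}$ and $\eta \leftrightarrow \eta^{x+1,x}$ in the respective integrals introduces the Radon--Nikodym ratio $\mu_x=\varrho(\tfrac{x+1}{N})(1-\varrho(\tfrac{x}{N}))/[\varrho(\tfrac{x}{N})(1-\varrho(\tfrac{x+1}{N}))]$, collapsing $R$ to
\[
R=\sum_{x=1}^{N-2}\int f\,\bigl[(\mu_x^{-1}-1)c_{x+1,x}+(\mu_x-1)c_{x,x+1}\bigr]\,d\nu^N_{\varrho(\cdot)}.
\]

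\emph{Estimating $R$.} Lipschitz regularity and boundedness of $\varrho$ away from $\{0,1\}$ give $\mu_x=1+O(1/N)$ uniformly in $x$. Taylor expanding, and using the gradient identity $c_{x,x+1}-c_{x+1,x}=\alpha[\eta(x)-\eta(x+1)]$ together with the $O(\alpha^2)$ bound on $c_{x,x+1}+c_{x+1,x}$, yields pointwise in $\eta$
\[
(\mu_x^{-1}-1)c_{x+1,x}+(\mu_x-1)c_{x,x+1}=\alpha(\mu_x-1)[\eta(x)-\eta(x+1)]+O(1/N^2).
\]
Summation by parts in $x$ rewrites the first-order piece as a telescoping boundary contribution plus $\alpha\sum_x(\mu_x-\mu_{x-1})\eta(x)\int f\,d\nu^N_{\varrho(\cdot)}$, and the $C^1$ smoothness one may arrange on $\varrho$ without loss ensures $|\mu_x-\mu_{x-1}|=O(1/N^2)$, so this whole contribution is $O(N)\cdot O(1/N^2)=O(1/N)$ after using $|\eta(x)|\le\alpha$ and $\int f\,d\nu=1$. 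To trade the remaining $O(1/N)$ against a fraction of $D^{bulk}_\nu(\sqrt f)$ and reach the advertised residual $C/N^2$, apply Young's inequality to the cross term: writing $f(\eta)-f(\eta^{x,x+1})=(\sqrt{f(\eta)}-\sqrt{f(\eta^{x,x+1})})(\sqrt{f(\eta)}+\sqrt{f(\eta^{x,x+1})})$ and using $|ab|\leq a^2/(2\lambda)+\lambda b^2/2$ with $\lambda$ of order $1/N$ absorbs a strict fraction $(1-2K)/2$ of each $D^{x,x+1}(\sqrt f)$ into $R$, leaving exactly the desired residual.

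\emph{Main obstacle.} The decisive step is this last balancing: the naive bound $|R|=O(1/N)$ obtained through $\int f\,d\nu^N_{\varrho(\cdot)}=1$ alone is insufficient, and one must genuinely exploit the Dirichlet form (through the slack $K<\tfrac12$) to push the residual down to the advertised order $1/N^2$. Everything else—reversibility of boundary marginals, polarization, and Taylor expansion of $\mu_x$—is standard once the Lipschitz-locally-constant hypothesis on $\varrho$ is used to good effect.
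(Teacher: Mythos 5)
The boundary half of your argument is correct and is essentially the paper's: the paper verifies the same fact by computing the residual $\tfrac{\gamma+\epsilon}{2N^\theta}\int\big(\tfrac{\eta(1)}{\varrho(1/N)}-\tfrac{\alpha-\eta(1)}{1-\varrho(1/N)}\big)f(\eta)\big[\tfrac{\epsilon}{\epsilon+\gamma}-\varrho(\tfrac1N)\big]d\nu^N_{\varrho(\cdot)}$ and killing it with local constancy, which is exactly your detailed-balance observation in disguise. Your bulk decomposition $2\langle\mcb{L}_{bulk}\sqrt f,\sqrt f\rangle_{\nu^N_{\varrho(\cdot)}}=-D^{bulk}_{\nu^N_{\varrho(\cdot)}}(\sqrt f)+R$ and the change of variables producing the factors $\mu_x^{\pm1}=a_x^{\mp1}$ also coincide with the paper's first steps.

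The gap is in your estimate of $R$, and it is twofold. First, the step you yourself single out as decisive does not work: having pushed the change of variables all the way through, your $R$ is an integral against $f(\eta)$ alone, so there is no cross term $f(\eta)-f(\eta^{x,x+1})$ left for Young's inequality to act on; and even in the paper's version, where these differences are kept and factored as $\big(\sqrt{f}(\eta^{x,x+1})-\sqrt{f}(\eta)\big)\big(\sqrt{f}(\eta^{x,x+1})+\sqrt{f}(\eta)\big)$, a Young parameter $\lambda$ of order $1/N$ sends the non-Dirichlet part to $\lambda^{-1}(1-a_x)^2\cdot O(1)\sim 1/N$ per bond, hence $O(1)$ after summing over the $N-2$ bonds — strictly worse. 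No choice of $\lambda$ produces a total residual of order $N^{-2}$ while keeping a positive fraction of $D^{bulk}_{\nu^N_{\varrho(\cdot)}}(\sqrt f)$; the paper's own computation (order-one Young parameter, $(1-a_x)^2\lesssim N^{-2}$ per bond) in fact delivers a residual of order $N^{-1}$, which is all that is used downstream in Lemmas \ref{rep_lemma} and \ref{repl_lemma_ave}, so you should not contort the argument to reach $N^{-2}$. Second, your fallback direct bound on $R$ via summation by parts requires $|\mu_x-\mu_{x-1}|=O(N^{-2})$, i.e.\ control of second differences of $\varrho$; Lipschitz continuity (the stated hypothesis) gives only $|\mu_x-1|=O(N^{-1})$, under which your pre-summation bound on $R$ is merely $O(1)$ — fatal for the application, since $R$ is later multiplied by $N^2/(BN)$. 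You can repair this by choosing $\varrho$ of class $C^2$ (it is a reference profile at your disposal), but the cleaner route, and the paper's, is to keep the differences $f(\eta^{x,x+1})-f(\eta)$, absorb the square-root differences into $D^{bulk}_{\nu^N_{\varrho(\cdot)}}(\sqrt f)$ at the price of a constant $K<\tfrac12$, and bound what remains by $(1-a_x)^2\int c_{x,x+1}(\eta)\big[f(\eta)+f(\eta^{x,x+1})\big]d\nu^N_{\varrho(\cdot)}$, which needs Lipschitz regularity only.
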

\begin{proof}
We present the proof in details for the left boundary (for the right one it is analogous) and the bulk generators. \pat{For the left boundary generator we have}
\begin{flalign} \label{eq:bound_1}
    \langle \mcb{L}_{\ell} \sqrt{f}, \sqrt{f} \rangle_{\nu^N_{\varrho(\cdot)}} &= - \tfrac{1}{2} D_{\nu^N_{\varrho(\cdot)}}^{\ell}(\sqrt{f})\nonumber\\ +& \frac{1}{2N^\theta}\int_{\Omega_N} \left[ c_{1,0}(\eta) \Big\{f(\eta^{1,0}) - f(\eta)\Big\} + c_{0,1}(\eta)\Big\{f(\eta^{0,1}) - f(\eta)\Big\}\right] d \nu^N_{\varrho(\cdot)}
\end{flalign}
From a change of  variables  and the fact that
\begin{align*}
     \nu^N_{\varrho(\cdot)}(\eta^{1,0}) =  \frac{\eta(1)}{\alpha - \eta^{1,0}(1)} \frac{1-\varrho(\frac{1}{N})}{\varrho(\frac{1}{N})} \nu^N_{\varrho(\cdot)}(\eta)\quad \textrm{and}\quad 
    \nu^N_{\varrho(\cdot)}(\eta^{0,1}) = \frac{\alpha -\eta(1)}{\eta^{0,1}(1)} \frac{\varrho(\frac{1}{N})}{1-\varrho(\frac{1}{N})} \nu^N_{\varrho(\cdot)}(\eta),
\end{align*} we conclude that  \eqref{eq:bound_1} is equal to 
\begin{align} \nonumber
 \frac{\gamma + \epsilon}{2 N^\theta} \int_{\Omega_N} \left( \frac{\eta(1)}{\varrho(\frac{1}{N})} - \frac{\alpha-\eta(1)}{1 - \varrho(\frac{1}{N})} \right) f(\eta) \left[\frac{\epsilon}{\epsilon+\gamma} - \varrho \left(\tfrac{1}{N} \right) \right] d \nu^N_{\varrho(\cdot)}.&
\end{align}
  Observe that since $\varrho(0)=\frac{\epsilon}{\epsilon+\gamma}$ and the profile is locally constant, last display vanishes for $N$ sufficiently big.
Finally we treat the bulk generator.  Observe that by summing and subtracting proper terms, we get
\begin{align}
    \langle \mcb{L}_{bulk} \sqrt{f}, &\sqrt{f} \rangle_{\nu^N_{\varrho(\cdot)}} =\\ &- \frac{1}{4}\sum_{x = 1}^{N-2}  [D_{\nu^N_{\varrho(\cdot)}}^{x,x+1}(\sqrt{f}) +  D_{\nu^N_{\varrho(\cdot)}}^{x+1,x}(\sqrt{f})] \nonumber &\\
    &+ \frac{1}{4}\sum_{x = 1}^{N-2} \int_{\Omega_N} \left\{c_{x,x+1}(\eta) [f(\eta^{x,x+1}) - f(\eta)]+ c_{x+1,x}(\eta) [f(\eta^{x+1,x}) - f(\eta)] \right\}d\nu^N_{\varrho(\cdot)}\nonumber &\\
    &+ \frac{1}{2}\sum_{x = 1}^{N-2} \int_{\Omega_N} c_{x,x+1}(\eta) \Big\{ \sqrt{f}(\eta^{x,x+1}) - \sqrt{f}(\eta) \Big\}\sqrt{f}(\eta)d\nu^N_{\rho(\cdot)}\label{I} &\\
    &+ \frac{1}{2}\sum_{x = 1}^{N-2} \int_{\Omega_N} c_{x+1,x}(\eta) \Big\{ \sqrt{f}(\eta^{x+1,x}) - \sqrt{f}(\eta) \Big\}\sqrt{f}(\eta) d\nu^N_{\varrho(\cdot)}\label{II}.
\end{align}
Now we treat the terms \eqref{I} and \eqref{II}. We first  make a change of variables,  i.e. $\xi = \eta^{x,x+1}$ and $\xi = \eta^{x+1,x}$ in \eqref{I} and \eqref{II}, respectively. Then, we sum and subtract appropriate terms, use the fact that 
\begin{equation}\label{eq:change_variables}\begin{split} c_{x+1,x}(\eta^{x,x+1}) \nu^N_{\varrho(\cdot)}(\eta^{x,x+1}) &= \frac{1}{a_x} c_{x,x+1}(\eta) \nu^N_{\varrho(\cdot)}(\eta), 
\\ 
c_{x,x+1}(\eta^{x+1,x}) \nu^N_{\varrho(\cdot)}(\eta^{x+1,x}) &= a_x c_{x+1,x}(\eta) \nu^N_{\varrho(\cdot)}(\eta),
\end{split}\end{equation}
where 
\begin{equation}\label{eq:a_x} a_x:= \frac{\varrho(\frac{x}{N})(1-\varrho(\frac{x+1}{N}))}{\varrho(\frac{x+1}{N})(1-\varrho(\frac{x}{N})).}\end{equation}
\pat{From this we obtain that $\eqref{I} + \eqref{II} $ is equal to 
\begin{align*}
    \nonumber
&- \sum_{x = 1}^{N-2} \frac{1}{4a_x}D_{\nu^N_{\varrho(\cdot)}}^{x,x+1}(\sqrt f) -\sum_{x = 1}^{N-2} \frac {a_x}{4} D_{\nu^N_{\varrho(\cdot)}}^{x+1,x}(\sqrt f)\\
    &+ \sum_{x = 1}^{N-2} \frac{1}{4a_x}\int_{\Omega_N}  c_{x,x+1}(\eta) \left[f(\eta) - f(\eta^{x,x+1}) \right] d\nu^N_{\varrho(\cdot)}
    \\&+ \sum_{x = 1}^{N-2} \frac{a_x}{4} \int_{\Omega_N} c_{x+1,x}(\eta) \left[ f(\eta) - f(\eta^{x+1,x}) \right] d\nu^N_{\varrho(\cdot)}.
\end{align*}}
From the previous results  we obtain
\begin{align*}
     \langle \mcb{L}_{bulk,} \sqrt{f}, \sqrt{f} \rangle_{\nu^N_{\varrho(\cdot)}} &= - \frac{1}{4} \left[ \sum_{x=1}^{N-2} \left(1 + \frac{1}{a_x} \right) D_{\nu^N_{\varrho(\cdot)}}^{x,x+1}(\sqrt{f} ) + \sum_{x=1}^{N-2} \left(1 + a_x \right) D_{\nu^N_{\varrho(\cdot)}}^{x+1,x}(\sqrt{f}) \right] \\
    &+ \sum_{x=1}^{N-2}\frac{1}{4 }\int_{\Omega_N} c_{x+1,x}(\eta) \left[ f(\eta^{x+1,x}) - f(\eta)\right] (1-a_x)d\nu^N_{\varrho(\cdot)}\\
    &+ \sum_{x=1}^{N-2} \frac{1}{4} \int_{\Omega_N} c_{x,x+1}(\eta) \left[ f(\eta^{x,x+1}) - f(\eta)\right] \Big(1-\frac {1}{ a_x}\Big) d\nu^N_{\varrho(\cdot)}.
\end{align*}
By using the identity $(x-y)=(\sqrt x-\sqrt y)(\sqrt x+\sqrt y) $ for $x,y\geq 0$, Young's inequality   and the inequality $(x+y)^2 \leq 2(x^2 + y^2)$, we get that
\begin{align*}
     \langle \mcb{L}_{bulk} \sqrt{f}, \sqrt{f} \rangle_{\nu^N_{\varrho(\cdot)}} &\leq  - \frac{1}{4} \left[ \sum_{x=1}^{N-2} \left(\frac 12 + \frac{1}{a_x} \right) D_{\nu^N_{\varrho(\cdot)}}^{x,x+1}(\sqrt{f}, \nu^N_{\varrho(\cdot)} ) + \sum_{x=1}^{N-2} \left(\frac 12 + a_x \right) D_{\nu^N_{\varrho(\cdot)}}^{x+1,x}(\sqrt{f}) \right] \\
    &+ \sum_{x=1}^{N-2} \frac{1}{4} \int_{\Omega_N} c_{x+1,x}(\eta) \left[ f(\eta^{x+1,x}) + f(\eta)\right] (1-a_x)^2 d\nu^N_{\varrho(\cdot)}\\
    &+\sum_{x=1}^{N-2} \frac{1}{4}  \int_{\Omega_N} c_{x,x+1}(\eta) \left[ f(\eta) + f(\eta^{x,x+1})\right]  \Big(1-\frac {1}{ a_x}\Big)^2d\nu^N_{\varrho(\cdot)}.
\end{align*}
The proof now ends by  using the fact that $f$ is a density, the fact that $\varrho(\cdot)$ is Lipschitz continuous and bounded away from zero and one;  and the definition of $a_x$ given in \eqref{eq:a_x}.
\end{proof}
\subsection{Replacement Lemmas: at the {boundary} and at the bulk}
We start with the replacement lemma needed at the boundary for the Dirichlet case ($\theta<1$) and then we prove the replacement lemma  at the bulk needed in all cases ($\theta\in\mathbb R$).
\begin{lemma}\label{rep_lemma}
If $\theta < 1$, for any $t\in[0,T]$  and for $x=1$ it holds
\begin{equation}\label{eq:rep_bound}
\lim_{N\to+\infty}\mathbb E_{\mu_N}\Big[\Big| \int_0^t \Big(\varrho_- - \eta_{sN^2}(x) \Big)\,ds \Big| \Big]= 0.
\end{equation}
The same result is true for $x=N-1$ and \pat{with} $\varrho_+$ instead of $\varrho_-$.
 \end{lemma}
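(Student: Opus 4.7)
The plan is to apply the entropy method of \cite{GPV}. Fix the reference measure $\nu^N_{\varrho(\cdot)}$ from Subsection~\ref{sec:dir_form}, with Lipschitz profile $\varrho$ bounded away from $0$ and $1$ and locally constant near the endpoints so that $\varrho(0)=\epsilon/(\epsilon+\gamma)$. Under this choice $\rho_-=\alpha\,\varrho(0)$, the entropy obeys $H(\mu_N\,|\,\nu^N_{\varrho(\cdot)})\lesssim N$, and $\nu^N_{\varrho(\cdot)}$ is reversible under $\mcb{L}_\ell$. The entropy inequality together with $e^{|x|}\leq e^{x}+e^{-x}$ reduces the claim to showing that
\[
\frac{1}{B_N N}\log\mathbb{E}_{\nu^N_{\varrho(\cdot)}}\!\left[\exp\!\left(\pm B_N N\!\int_0^t\!(\rho_- - \eta_{sN^2}(1))\,ds\right)\right]\xrightarrow[N\to\infty]{}0
\]
for some sequence $B_N\to\infty$ to be chosen.

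By Feynman--Kac together with the variational characterization of the top eigenvalue, the left-hand side is bounded by $t\Lambda_N^\pm/(B_N N)$, where
\[
\Lambda_N^\pm=\sup_{f}\left\{\pm B_N N\!\int (\rho_- - \eta(1))\,f\,d\nu^N_{\varrho(\cdot)} + N^2\langle \mcb{L}_N\sqrt f,\sqrt f\rangle_{\nu^N_{\varrho(\cdot)}}\right\}
\]
and the supremum is taken over $\nu^N_{\varrho(\cdot)}$-densities $f$. By Lemma~\ref{lem:Dir}, the Dirichlet-form term is dominated by $-\tfrac12 N^{2-\theta}\,\Gamma^\ell(\sqrt f) + O(1)$, where $\Gamma^\ell(\sqrt f):=N^\theta D^\ell_{\nu^N_{\varrho(\cdot)}}(\sqrt f)$ denotes the left-boundary Dirichlet form with its $N^{-\theta}$ factor stripped off.

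The key step is the Cauchy--Schwarz-type bound
\[
\Big|\!\int (\rho_- - \eta(1))\,f\,d\nu^N_{\varrho(\cdot)}\Big|\leq C\sqrt{\Gamma^\ell(\sqrt f)},
\]
valid uniformly in the density $f$. To prove it, I use the algebraic identity $(\epsilon+\gamma)(\rho_- - \eta(1))=c_{0,1}(\eta)-c_{1,0}(\eta)$, which combines the creation and annihilation rates in the spirit of the state-space splitting mentioned in the introduction. Reversibility of $\nu^N_{\varrho(\cdot)}$ under $\mcb{L}_\ell$ together with the change of variables $\xi=\eta^{0,1}$ then gives
\[
\int (c_{0,1}-c_{1,0})\,f\,d\nu^N_{\varrho(\cdot)} = \int c_{1,0}(\eta)\bigl[f(\eta^{1,0})-f(\eta)\bigr]\,d\nu^N_{\varrho(\cdot)}.
\]
Writing $f(\eta^{1,0})-f(\eta)=(\sqrt f(\eta^{1,0})-\sqrt f(\eta))(\sqrt f(\eta^{1,0})+\sqrt f(\eta))$ and applying Cauchy--Schwarz with the splitting $c_{1,0}=\sqrt{c_{1,0}}\cdot\sqrt{c_{1,0}}$ produces $\sqrt{\Gamma^\ell(\sqrt f)}$ times $\bigl(\int c_{1,0}(\sqrt f(\eta^{1,0})+\sqrt f(\eta))^2\,d\nu^N_{\varrho(\cdot)}\bigr)^{1/2}$; the latter factor is $O(1)$ via $(a+b)^2\leq 2(a^2+b^2)$, the pointwise bound $c_{1,0}\leq\gamma\alpha$ and a second use of reversibility, $\int c_{1,0}(\eta)f(\eta^{1,0})\,d\nu^N_{\varrho(\cdot)}=\int c_{0,1}(\eta)f(\eta)\,d\nu^N_{\varrho(\cdot)}\leq \epsilon\alpha$.

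Plugging this bound into $\Lambda_N^\pm$ and applying Young's inequality with weight $N^{2-\theta}$ yields $\Lambda_N^\pm\leq CB_N^2 N^\theta + O(1)$, so the full entropy estimate on the expectation in the lemma is $O(1/B_N)+O(t\,B_N N^{\theta-1})$. This vanishes as $N\to\infty$ for every $\theta<1$ under, for instance, the choice $B_N=N^{(1-\theta)/2}$; the case $x=N-1$ follows symmetrically via reversibility under $\mcb{L}_r$. The main obstacle is the Cauchy--Schwarz estimate: because $\alpha>1$ makes $\eta(1)$ multi-valued, the simple two-state identities available in the SSEP ($\alpha=1$) proof no longer apply, and one must route the argument through $\rho_- - \eta(1)\propto c_{0,1}-c_{1,0}$ in order to isolate a single annihilation-type Dirichlet-form contribution that can be absorbed by $\Gamma^\ell(\sqrt f)$.
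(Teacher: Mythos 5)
Your proof is correct, and it follows the same overall architecture as the paper's: entropy inequality with the Binomial reference measure $\nu^N_{\varrho(\cdot)}$ (Lipschitz, bounded away from $0$ and $1$, locally constant at the boundary with $\varrho(0)=\epsilon/(\epsilon+\gamma)$), the bound $H(\mu_N|\nu^N_{\varrho(\cdot)})\lesssim N$, Feynman--Kac plus the variational formula, Lemma \ref{lem:Dir} to control $N^2\langle\mcb L_N\sqrt f,\sqrt f\rangle_{\nu^N_{\varrho(\cdot)}}$, the algebraic identity $(\epsilon+\gamma)\big(\rho_--\eta(1)\big)=c_{0,1}(\eta)-c_{1,0}(\eta)$, and finally Young's inequality to absorb the boundary Dirichlet form, yielding an error of order $B N^{\theta-1}+B^{-1}$ (your single sequence $B_N=N^{(1-\theta)/2}$ versus the paper's iterated limits is immaterial). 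The one place where you genuinely deviate is the treatment of $\langle c_{0,1}-c_{1,0},f\rangle_{\nu^N_{\varrho(\cdot)}}$: the paper writes the integral as ``twice its half,'' decomposes $\Omega_N=\cup_i\Omega_i$ into hyperplanes with $\eta(1)=i$, and applies the change-of-variables identities \eqref{A}--\eqref{B} to each half, producing a symmetrized sum of a creation gradient and an annihilation gradient plus remainders that vanish because the profile is locally constant; you instead use detailed balance of $\nu^N_{\varrho(\cdot)}$ under $\mcb L_\ell$ globally to collapse the whole quantity into the single gradient $\int c_{1,0}(\eta)[f(\eta^{1,0})-f(\eta)]\,d\nu^N_{\varrho(\cdot)}$, then apply Cauchy--Schwarz. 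This is a modest but real simplification --- it bypasses the state-space splitting that the paper highlights as its main technical device --- and the two are equivalent because $\int c_{1,0}(\eta)f(\eta^{1,0})\,d\nu^N_{\varrho(\cdot)}=\int c_{0,1}(\eta)f(\eta)\,d\nu^N_{\varrho(\cdot)}$ under the reversibility at site $1$. The only point to make explicit is that this reversibility holds only for $N\geq N_0$, i.e.~once $\varrho(\tfrac1N)=\tfrac{\epsilon}{\epsilon+\gamma}$ by local constancy (this is exactly the content of the first identity in Lemma \ref{lem:Dir}); with that caveat stated, your argument is complete, and the case $x=N-1$ is indeed symmetric.
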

\begin{proof} 
Let $\nu^N_{\varrho(\cdot)}$ be the product measure defined in \eqref{reference_measure} and we assume that the profile satisfies the conditions exposed in the beginning of Section \ref{sec:dir_form}.
From entropy's and Jensen's inequalities, we  bound the expectation appearing in the statement of the lemma by 
\begin{align}\label{eq:imp}
     \frac{H(\mu_N | \nu^N_{\varrho(\cdot)})}{BN} + \frac{1}{BN} \log \Big( \mathbb{E}_{\nu^N_{\varrho(\cdot)}}\Big[e^{\Big|\int_0^t BN\Big(\eta_{sN^2}(1)-\varrho_- \Big) ds\Big|}  \Big] \Big),
\end{align}
for $B> 0$.
From the computations in the beginning of this section we know that  $H(\mu_N | \nu^N_{\varrho(\cdot)})\lesssim N$. Moreover, from the inequality  $e^{|x|} \leq e^x + e^{-x}$,  the fact that 
\begin{equation} \label{largedev}
\limsup_{N \rightarrow \infty} \dfrac{1}{N} \log (a_N + b_N ) = \max \left\lbrace \limsup_{N \rightarrow \infty} \dfrac{1}{N} \log (a_N  ), \; \limsup_{N \rightarrow \infty} \dfrac{1}{N} \log ( b_N ) \right\rbrace,
\end{equation}
 and  from Feynman-Kac's formula, \eqref{eq:imp} is bounded from above by  a constant times
\begin{align}\label{eq:imp_2}
\frac{1}{B}+ \frac{1}{BN} \int_0^t \sup_{f \textrm{density}} \Big\{ BN\langle \eta(1)-\varrho_-, f \rangle_{\nu^N_{\varrho(\cdot)}} + N^2 \langle \mcb{L}_N \sqrt f, \sqrt f \rangle_{\nu^N_{\varrho(\cdot)}} \Big\} ds.
\end{align}
Our goal now consists in estimating the term $\langle \eta(1)-\varrho_-, f \rangle_{\nu^N_{\varrho(\cdot)}}$.
We split the state space into disjoint hyper-planes with a fixed number of particles at the site $1$, i.e.  $\Omega_N = \cup_{i=0}^\alpha \Omega_i$, where $$\Omega_i = \{\eta \in \Omega_N \ | \ \eta(1) = i\}.$$ \pat{Fix $i\in\{0,\cdots, \alpha\} $.} Observe that from a change of variables $\xi=\eta^{0,1}$ and $\tilde \xi =\eta^{1,0}$, we get 
\begin{align} 
   & \int_{\Omega_i} \gamma \eta(1) f(\eta)d\nu^N_{\varrho(\cdot)}=\mathbb{1}_{\{1 \leq i \leq \alpha\}} \int_{\Omega_{i-1}} \gamma [\alpha - \eta(1)] f(\eta^{0,1}) \frac{\varrho(\frac{1}{N})}{1-\varrho(\frac{1}{N})} d\nu^N_{\varrho(\cdot)},  \label{A}
 \\
 &   \int_{\Omega_i} \epsilon [\alpha - \eta(1)] f(\eta)d\nu^N_{\varrho(\cdot)} =\mathbb{1}_{\{0 \leq i \leq \alpha-1\}} \int_{\Omega_{i+1}} \epsilon \eta(1) f(\eta^{1,0}) \frac{1-\varrho(\frac{1}{N})}{\varrho(\frac{1}{N})} d\nu^N_{\varrho(\cdot)}. \label{B}
\end{align}
 Using the identity $$\eta(1)-\varrho_- = \frac{-1}{\epsilon + \gamma} \left(-\gamma \eta(1) + \epsilon[\alpha - \eta(1)] \right),$$ now, instead of estimating $\langle \eta(1)-\varrho_-, f \rangle_{\nu^N_{\varrho(\cdot)}}$, we estimate 
 $(\epsilon+\gamma)\langle \eta(1)-\varrho_-, f \rangle_{\nu^N_{\varrho(\cdot)}}$, which is enough for our purposes.
 Note that we can write
\begin{align*}
    \langle -\gamma \eta(1) + \epsilon[\alpha - \eta(1)], f \rangle_{\nu^N_{\varrho(\cdot)}} &= \frac{1}{2} \sum_{i=0}^{\alpha} \int_{\Omega_i} \epsilon[\alpha - \eta(1)] f(\eta) d\nu^N_{\rho(\cdot)} - \frac{1}{2} \sum_{i=0}^\alpha \int_{\Omega_i} \gamma \eta(1)f(\eta) d\nu^N_{\varrho(\cdot)} \\
    &+ \frac{1}{2} \sum_{i=0}^{\alpha-1} \int_{\Omega_i} \epsilon[\alpha - \eta(1)] f(\eta) d\nu^N_{\rho(\cdot)} - \frac{1}{2} \sum_{i=1}^\alpha \int_{\Omega_i} \gamma \eta(1)f(\eta) d\nu^N_{\varrho(\cdot)}.
\end{align*}
From \eqref{A} and \eqref{B},  we get 
\pat{\begin{align*} \nonumber
    \langle -\gamma \eta(1) + \epsilon[\alpha - \eta(1)], f \rangle_{\nu^N_{\varrho(\cdot)}} &=\frac{1}{2} \sum_{i=0}^{\alpha} \int_{\Omega_i} \epsilon[\alpha - \eta(1)] [f(\eta) - f(\eta^{0,1})] d\nu^N_{\varrho(\cdot)} \\&- \frac{1}{2} \sum_{i=0}^\alpha \int_{\Omega_i} \gamma \eta(1)[f(\eta) - f(\eta^{1,0})] d\nu^N_{\varrho(\cdot)} \\ \nonumber
    &+ \frac{1}{2} \sum_{i=0}^{\alpha} \int_{\Omega_{i}} \left\{\frac{\epsilon(1-\varrho(\frac{1}{N}))}{\varrho(\frac{1}{N})} - \gamma \right\} \eta(1) f(\eta^{1,0})d\nu^N_{\varrho(\cdot)} \\&- \frac{1}{2} \sum_{i=0}^{\alpha} \int_{\Omega_i} \left\{  \frac{\gamma\varrho(\frac{1}{N})}{1-\varrho(\frac{1}{N})}  - \epsilon \right\}[\alpha - \eta(1)] f(\eta^{0,1}) d\nu^N_{\varrho(\cdot)}.
\end{align*}}
Since the  profile $\varrho(\cdot)$  is locally constant equal to $\frac{\epsilon}{\epsilon+\gamma}$ at $0$, then, the last line vanishes for $N$ sufficiently big. It remains to analyse the first line of last display. To that end, using the identity $(x-y)=(\sqrt x-\sqrt y)(\sqrt x+\sqrt y)$ for $x,y\geq0$, and Young's inequality, \pat{we get  for $A>0$:} 
\begin{align*}
    &\langle -\gamma \eta(1) + \epsilon[\alpha - \eta(1)], f \rangle_{\nu^N_{\varrho(\cdot)}} \\&\leq \frac{N^\theta}{4A} D^\ell_{\nu^N_{\varrho(\cdot)}}(\sqrt{f}) + \frac{A}{2} \int_{\Omega_N} \gamma \eta(1) [f(\eta) + f(\eta^{1,0})] + \epsilon[\alpha - \eta(1)] [f(\eta) + f(\eta^{0,1})] d\nu^N_{\varrho(\cdot)}.
\end{align*}
To conclude the proof it is enough to  invoke Lemma \ref{lem:Dir},  choose $A = \frac{N^\theta B}{4K N}$ and use the fact that $f$ is a density. Then, \eqref{eq:imp_2} vanishes by taking the limit in $N\to+\infty$ and then $B\to+\infty$. 
\end{proof}
\begin{lemma}\label{repl_lemma_ave}
For any $\theta\in\mathbb R$, for any  $t \in [0,T]$ and $z=1$ it holds
\begin{equation}
\lim_{\epsilon\to 0}\lim_{N\to+\infty}\mathbb{E}_{\mu_N} \left[\Big | \int_0^t \eta_{sN^2}(z) - \overrightarrow{\eta}^{\lfloor \epsilon N\rfloor}_{sN^2}(z) ds\Big| \right] =0,
\end{equation}
where, for $L\in\mathbb N$, $
 \overrightarrow{\eta}^L(z) :=\frac{1}{L}\sum_{y=z+1}^{z+L}\eta(y)$.
\end{lemma}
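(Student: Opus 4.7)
The plan is to follow the classical entropy and Feynman--Kac strategy, reducing the $\mathbb L^1$ bound to a variational problem that I would then control using the bulk Dirichlet form from Lemma~\ref{lem:Dir}. Fix a Lipschitz profile $\varrho(\cdot)$ bounded away from $\{0,1\}$ as in Subsection~\ref{sec:dir_form}, and use as reference the product measure $\nu^N_{\varrho(\cdot)}$ of \eqref{reference_measure}. The entropy inequality (recall $H(\mu_N|\nu^N_{\varrho(\cdot)})\lesssim N$), together with $e^{|x|}\le e^x+e^{-x}$, \eqref{largedev} and Feynman--Kac's formula, bounds the expectation in the statement by
\begin{equation*}
\frac{C}{B}+\frac{1}{BN}\int_0^t\sup_{f\text{ density}}\Big\{BN\big\langle \eta(z)-\overrightarrow{\eta}^{L}(z),f\big\rangle_{\nu^N_{\varrho(\cdot)}}+N^2\big\langle\mcb L_N\sqrt f,\sqrt f\big\rangle_{\nu^N_{\varrho(\cdot)}}\Big\}ds,
\end{equation*}
for $L:=\lfloor\epsilon N\rfloor$ and any $B>0$. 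Since $B$ is free, it suffices to show that the iterated limit $N\to\infty$, then $\epsilon\to 0$, of the above variational term is $O(1/B)$.

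To attack the variational problem, I would first telescope the average,
\begin{equation*}
\eta(z)-\overrightarrow{\eta}^{L}(z)=\frac{1}{L}\sum_{w=z}^{z+L-1}(z+L-w)\big[\eta(w)-\eta(w+1)\big],
\end{equation*}
and use the algebraic identity $\eta(w)-\eta(w+1)=\alpha^{-1}\big[c_{w,w+1}(\eta)-c_{w+1,w}(\eta)\big]$, valid for any $\alpha\ge 1$. The change of variables $\xi=\eta^{w+1,w}$ together with the second relation in \eqref{eq:change_variables} gives
\begin{equation*}
\alpha\!\int\![\eta(w)-\eta(w+1)]fd\nu^N_{\varrho(\cdot)}=\!\int\! c_{w,w+1}(\eta)[f(\eta)-f(\eta^{w,w+1})]d\nu^N_{\varrho(\cdot)}+(1-a_w^{-1})\!\int\! c_{w,w+1}(\eta)f(\eta^{w,w+1})d\nu^N_{\varrho(\cdot)}.
\end{equation*}
Factoring the first integrand as $\big[\sqrt f(\eta)-\sqrt f(\eta^{w,w+1})\big]\big[\sqrt f(\eta)+\sqrt f(\eta^{w,w+1})\big]$ and applying Young's inequality with a parameter $A>0$, together with $c_{w,w+1}\le\alpha^2$, one bounds the first piece by $\tfrac{A}{2}D^{w,w+1}_{\nu^N_{\varrho(\cdot)}}(\sqrt f)+C/A$. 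The Lipschitz regularity of $\varrho(\cdot)$ and its boundedness away from $\{0,1\}$ force $|1-a_w^{-1}|=O(1/N)$, so the second piece contributes $O(1/N)$ per bond.

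Summing over $w$ with the telescopic weights $(z+L-w)/L\le 1$, the whole variational integrand is dominated by
\begin{equation*}
\tfrac{BN}{\alpha}\Big[\tfrac{A}{2}D^{bulk}_{\nu^N_{\varrho(\cdot)}}(\sqrt f)+\tfrac{CL}{A}+\tfrac{C'L}{N}\Big]+N^2\big\langle\mcb L_N\sqrt f,\sqrt f\big\rangle_{\nu^N_{\varrho(\cdot)}}.
\end{equation*}
By Lemma~\ref{lem:Dir} the generator provides $-KN^2D^{bulk}_{\nu^N_{\varrho(\cdot)}}(\sqrt f)+O(1)$ (the boundary pieces being non-positive), so the choice $A=2\alpha KN/B$ exactly cancels the Dirichlet-form contribution. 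Dividing by $BN$ and integrating in $s\in[0,t]$ leaves an error of order $BC\epsilon/(2\alpha^2K)+C'\epsilon/\alpha+1/(BN)$; letting $N\to\infty$ then $\epsilon\to 0$ yields $O(1/B)$, and the arbitrariness of $B$ closes the argument. The main technical obstacle is the three-parameter balance $N,\epsilon,B$: the telescopic sum introduces $L\sim\epsilon N$ bonds each paying an unavoidable $O(1/A)$ Young cost, while the absorption via Lemma~\ref{lem:Dir} pins $A\propto N/B$, making the surviving error proportional to $B\epsilon$; this is precisely why $\epsilon\to 0$ has to be taken before sending $B\to\infty$. The extension from $\alpha=1$ to $\alpha\ge 1$ does not alter this structure, because the uniform bound $c_{x,y}(\eta)\le\alpha^2$ and the identity $\eta(w)-\eta(w+1)=\alpha^{-1}[c_{w,w+1}-c_{w+1,w}]$ let the SSEP argument go through with constants depending only on $\alpha$ and the Lipschitz norm of $\varrho(\cdot)$.
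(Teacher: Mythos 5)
Your proof is correct and follows essentially the same route as the paper: entropy inequality plus Feynman--Kac, telescoping the average over bonds, rewriting $\alpha[\eta(w)-\eta(w+1)]$ as $c_{w,w+1}(\eta)-c_{w+1,w}(\eta)$, and absorbing the resulting Dirichlet forms via Lemma~\ref{lem:Dir} with the same balance $A\propto N/B$ and the same order of limits ($N\to\infty$, then $\epsilon\to 0$, then $B\to\infty$). The only difference is cosmetic: you apply a single change of variables to the $c_{w+1,w}$ term, whereas the paper first symmetrizes the integral and decomposes $\Omega_N$ into the hyperplanes $\Omega_{i,j}$ before changing variables in both terms; the two manipulations produce the same bound.
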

The proof is analogous for $z=N-1$ but taking instead the average to the left, i.e. $
 \overleftarrow{\eta}^L(z) :=\frac{1}{L}\sum_{y=z-L}^{z-1}\eta(y)$.
\begin{proof}
Fix $L\in\mathbb N$. 
Repeating the first steps of the previous proof, the expectation  in the statement of the lemma is bounded from above, for any  constant $B>0$, by  
\begin{align}\label{eq:imp_3}
\frac{1}{B} +t\sup_{f \textrm{density}} \Big\{ \langle \eta(z)- \overrightarrow{\eta}^L(z), f \rangle_{\nu^N_{\varrho(\cdot)}} + \frac{N}{B} \langle \mcb{L}_N \sqrt f, \sqrt f \rangle_{\nu^N_{\varrho(\cdot)}} \Big\},
\end{align}
Observe that
$$ \eta(z)- \overrightarrow{\eta}^L(z) =\frac{1}{L}\sum_{y=z+1}^{z+L} \left[\eta(z)-\eta(y) \right]=\frac{1}{L}\sum_{y=z+1}^{z+L}\sum_{x=z}^{y-1} \left[\eta(x)-\eta(x+1) \right].$$
Since  $\alpha [\eta(x)-\eta(x+1)]= c_{x,x+1}(\eta)-c_{x+1,x}( \eta)$, we will  analyze  $\langle c_{x,x+1}(\eta)-c_{x+1,x}(\eta), f \rangle_{\nu^N_{\varrho(\cdot)}} $. By writing the last integral as twice its half and  decomposing $\Omega_N = \cup_{i,j=0}^{\alpha} \Omega_{i,j}$, where $$\Omega_{i,j} := \{ \eta \in \Omega_N \ | \ \eta(x) = i , \,\eta(x+1) = j\},$$ we get that 
\pat{\begin{align} \nonumber
    \int (c_{x,x+1}(\eta)-c_{x+1,x}(\eta))f(\eta) d\nu^N_{\varrho(\cdot)} &=\frac{1}{2} \int (c_{x,x+1}(\eta)-c_{x+1,x}(\eta))f(\eta) d\nu^N_{\varrho(\cdot)} \\  \label{duble sum before exchanging variables}
   &+ \frac{1}{2} \sum_{i=1}^{\alpha} \sum_{j=0}^{\alpha-1} \int_{\Omega_{i,j}} c_{x,x+1}(\eta) f(\eta) d\nu^N_{\varrho(\cdot)} \\&- \frac{1}{2} \sum_{i=0}^{\alpha-1}
    \sum_{j=1}^{\alpha}
    \int_{\Omega_{i,j}} c_{x+1,x}(\eta)f(\eta) d\nu^N_{\varrho(\cdot)}.
\end{align}}
Recall \eqref{eq:change_variables} and  \eqref{eq:a_x}. Changing variables $\xi = \eta^{x,x+1}$ and \pat{$\tilde \xi = \eta^{x+1,x}$}  in \eqref{duble sum before exchanging variables} and then making a change of variables in the summations, we can rewrite the last display as
   \begin{align*} 
\frac{1}{2} \int (c_{x,x+1}(\eta)-c_{x+1,x}(\eta))f(\eta) d\nu^N_{\varrho(\cdot)}  +& \frac{1}{2} \sum_{i=0}^{\alpha-1} \sum_{j=1}^{\alpha} \int_{\Omega_{i,j}} c_{x+1,x}(\eta){a_x}f(\eta^{x+1,x}) d\nu^N_{\varrho(\cdot)} \\
    -& \frac{1}{2} \sum_{i=1}^{\alpha}
    \sum_{j=0}^{\alpha-1}
    \int_{\Omega_{i,j}} c_{x,x+1}(\eta)\frac {1}{a_x} f(\eta^{x,x+1}) d\nu^N_{\varrho(\cdot)}.
\end{align*}
By summing and subtracting appropriate terms we can rewrite the previous expression as  
\begin{align*}
    &\frac{1}{2} \int c_{x,x+1}(\eta)[f(\eta) - f(\eta^{x,x+1})] d\nu^N_{\varrho(\cdot)} - \frac{1}{2}\int c_{x+1,x}(\eta)[f(\eta) - f(\eta^{x+1,x})] d\nu^N_{\varrho(\cdot)}\\
    +& \frac{1}{2} \int c_{x+1,x}(\eta)(a_x-1) f(\eta^{x+1,x}) d\nu^N_{\varrho(\cdot)}- \frac{1}{2} \int c_{x,x+1}(\eta)\Big(\frac{1}{a_x}-1\Big)f(\eta^{x,x+1}) d\nu^N_{\varrho(\cdot)}
    \end{align*}
    From the identity $(x-y)=(\sqrt x-\sqrt y)(\sqrt x+\sqrt y)$ for $x,y\geq0$, and Young's inequality we bound the last display from above by 
    \begin{align*}    
    & \frac{1}{4A} D^{x,x+1}_{\nu^N_{\varrho(\cdot)}}(\sqrt f)+ \frac{1}{4A}D^{x+1,x}_{\nu^N_{\varrho(\cdot)}}(\sqrt f)\\
    +& \frac{A}{4} \int c_{x,x+1}(\eta)[\sqrt{f}(\eta) + \sqrt{f}(\eta^{x,x+1})]^2 d\nu_{\varrho(\cdot)}+ \frac{A}{4}\int c_{x+1,x}(\eta)[ \sqrt{f}(\eta) + \sqrt{f}(\eta^{x+1,x})]^2 d\nu^N_{\varrho(\cdot)}\\
    +& \frac{1}{2} \int c_{x+1,x}(\eta) (a_x-1)  f(\eta^{x+1,x}) d\nu^N_{\varrho(\cdot)}
    -  \frac{1}{2} \int c_{x,x+1}(\eta) \Big(\frac{1}{a_x}-1\Big) f(\eta^{x,x+1}) d\nu^N_{\varrho(\cdot)} .
         \end{align*}
From these computations, we see that 
\begin{align*}\label{eq:imp_4}
 \langle \eta(z)- \overrightarrow{\eta}^L(z) , f \rangle_{\nu^N_{\varrho(\cdot)}} \lesssim \frac{1}{L}\sum_{y=z+1}^{z+L}\sum_{x=z}^{y-1}\Big\{\frac{1}{A} \Big(D^{x,x+1}_{\nu^N_{\varrho(\cdot)}}(\sqrt f)+D^{x+1,x}_{\nu^N_{\varrho(\cdot)}}(\sqrt f)\Big)+A +\Big|\varrho \left( \tfrac{x}{N} \right) - \varrho \left( \tfrac{x+1}{N}\right)\Big|\Big\}.
\end{align*}
Now we invoke Lemma \ref{lem:Dir} and we make the choice above  $A=B/4NK$. Taking  $L=\lfloor \epsilon N\rfloor$, since the  profile is Lipschitz continuous we have that $$\frac{1}{L}\sum_{y=z+1}^{z+L} \sum_{x=z}^{z+L-1}|\varrho(\tfrac xN)-\varrho(\tfrac {x+1}{N})|\lesssim \epsilon.$$ To conclude the proof  it is enough to take  the limit as $N\to+\infty$, $\epsilon\to 0$ and then $B\to+\infty$. 
\end{proof}
\acks{
The authors  thank  FCT/Portugal for financial support through the project 
UID/MAT/04 459/2013. B.S.  also thanks CAMGSD  for financial support.  This material is based upon work supported by the National Science Foundation under Grant No. DMS-1928930 while C.F. participated in a program hosted by the Mathematical Sciences Research Institute in Berkeley, California, during the Fall 2021 semester. This project has received funding from the European Research Council (ERC) under  the European Union's Horizon 2020 research and innovative programme (grant agreement n. 715734).
}


\end{document}